\newtheorem{theorem}{Theorem}[section]
\newtheorem{lemma}[theorem]{Lemma}
\newtheorem{remark}[theorem]{Remark}
\theoremstyle{definition}
\theoremstyle{remark}
\newcommand{\R}{{\mathbb R}}
\newcommand{\ve}{{\varepsilon}}
\newcommand{\vp}{{\varphi}}
\newcommand{\p}{{\partial}}
\newcommand{\mc}{\mathcal }
\newcommand{\mf}{\mathbf }
\newcommand{\mb}{\mathbb }
\newcommand{\wt}{\widetilde }
\numberwithin{equation}{section}
\begin{document}

\title[Inverse Jacobian multipliers and Hopf bifurcation]{Inverse Jacobian multipliers and Hopf bifurcation on center manifolds}



\author[X. Zhang]{Xiang Zhang}
\address{Department of Mathematics, and MOE-LSC, Shanghai Jiao Tong University,  Shanghai 200240,  People's Republic of China.}
\email{xzhang@sjtu.edu.cn}

\subjclass[2010]{34A34, 37C10, 34C14, 37G05. }

\date{}

\dedicatory{}

\keywords{Inverse Jacobian multiplier; Hopf bifurcation; normal form; center; focus.}

\begin{abstract}
In this paper we consider a class of higher dimensional differential systems in $\mathbb R^n$ which have a two dimensional center
manifold at the origin with a pair of pure imaginary eigenvalues. First we characterize the existence of either analytic or $C^\infty$ inverse
Jacobian multipliers of the systems around the origin, which is either a center or a focus on the center manifold. Later we
study the cyclicity of the system at the origin through Hopf bifurcation by using the vanishing multiplicity of the inverse Jacobian multiplier.
\end{abstract}

\maketitle

\bigskip

\section{Background and statement of the main results}\label{s1}

For real planar differential systems, the problems on center--focus and Hopf bifurcation are classical and related.
They are important subjects in the bifurcation theory and also in the study of the Hilbert's 16th problem \cite{CL07, DLA06,RS09,Ye86}.

For planar non--degenerate center,
Poincar\'e provided an equivalent characterization.

\noindent{\bf Poincar\'e center Theorem}. {\it For a real planar analytic differential system with the origin as a singularity having a pair of pure imaginary eigenvalues, then the origin is a center if and only if the system has a local analytic first integral, and if and only if the system is analytically equivalent to
\[
\dot u=-\mathbf i u(1+g(uv)),\quad \dot v=\mathbf i v(1+g(uv)),
\]
with $g(uv)$  without constant terms, where we have used the conjugate complex coordinates instead of the two real ones.
}

This result has a higher dimensional version, see for instance \cite{LPW12,Zh08,Zh13}, which characterizes the equivalence between the analytic integrability and
the existence of analytic normalization of analytic differential systems to its Poincar\'e--Dulac normal form of a special type.

Reeb \cite{Re52} in 1952 provided another characterization on planar centers via inverse integrating factor. Recall that a function $V$ is an {\it inverse integrating factor} of a planar differential system if  $1/V$ is an integrating factor of the system. From \cite{EP09, GGG10,GG10,GLV96} we know that inverse integrating factors have better properties than integrating factors.

\noindent{\bf Reeb center Theorem}. {\it Real planar analytic differential system
 \[
 \dot x=-y+f_1(x,y),\qquad \dot y=x+f_2(x,y),
 \]
has the origin as a center if and only if it admits a real analytic local inverse integrating factor with non--vanishing constant part.
}

Poincar\'e center theorem was extended to higher dimensional differential systems which have a two dimensional center manifold by Lyapunov.
Consider analytic differential systems in $\mathbb R^n$
\begin{eqnarray}\label{e1}
\dot x&=&-y+f_1(x,y, z)=F_1(x,y,z),\nonumber\\
\dot y&=&\,\,\,\,\, x+f_2(x,y, z)=F_2(x,y,z),\\
\dot z&=& Az+f(x,y,z)=F(x,y,z),\nonumber
\end{eqnarray}
with $z=(z_3,\ldots,z_n)^{tr}$, $A$ is a real square matrix of order $n-2$, and $f=(f_3,\ldots,f_n)^{tr}$ and $F=(F_3,\ldots,F_n)^{tr}$.
Hereafter we use $ tr$ to denote the transpose of a matrix.
Moreover we assume that $\mf f:=(f_1,f_2,f)=O(|(x,y,z)|^2)$ are $n$ dimensional vector valued analytic  functions. We denote by
\[
\mathcal X=F_1(x,y,z)\frac{\partial }{\partial x}+F_2(x,y,z)\frac{\partial }{\partial y}+\sum\limits_{j=3}\limits^n F_j(x,y,z)\frac{\partial }{\partial z_j}
\]
the vector field associated to systems \eqref{e1}.

Assume that the eigenvalues of $A$ all have non--zero real parts. Then from the Center Manifold Theorem we get that system \eqref{e1} has
a center manifold tangent to the $(x,y)$ plane at the origin (of course center manifolds are not necessary unique, and may not be analytic even not $C^\infty$). Moreover this center manifold can be represented as
\begin{equation}\label{ec}
\mathcal M^c=\bigcap_{j=3}^n\{z_j=h_j(x,y)\}.
\end{equation}

\noindent{\bf Lyapunov center Theorem}. {\it Assume that $A$ has no eigenvalues with vanishing real parts. The following statements hold.
\begin{itemize}
\item[$(a)$] System \eqref{e1} restricted to the center manifold has the origin as a center if and
only if it admits a real analytic local first integral of the form $\Phi(x,y,z) = x^2 + y^2 +$\,{\rm higher order term}  in a neighborhood of the origin in $\mathbb R^n$.
\item[$(b)$] If the condition in statement $(a)$ holds, then the center manifold is unique and analytic.
\end{itemize}}

For a proof of the Lyapunov center Theorem, we refer to \cite{Si85} and \cite[Theorems 3.1, 3.2 and $\S$5]{Bi79}.

Reeb center Theorem via inverse integrating factor was extended to differential systems in $\mathbb R^3$ by Buic\u{a}, Garc\'ia and Maza \cite{BGM12}. A smooth function $J(x)$ is an {\it inverse Jacobian multiplier} of system \eqref{e1} if
\[
\mathcal X(J)=J\mbox{div}\mathcal X.
\]
In fact, if $J(x)$ is an inverse Jacobian multiplier of system \eqref{e1} then $1/J$ is a Jacobian multiplier of the system, i.e.
\[
\partial_{x}\left(\frac{F_1}{J}\right)+\partial_{y}\left(\frac{F_2}{J}\right)+\partial_{z_3}\left(\frac{F_3}{J}\right)
   +\ldots+\partial_{z_n}\left(\frac{F_n}{J}\right)=0,
\]
where $\partial_x$ denotes the partial derivative with respect to $x$.

Buic\u{a} {\it et al}'s main results in \cite{BGM12} can be summarized as following.

\noindent{\bf Buic\u{a}, Garc\'ia and Maza center--focus Theorem}. {\it Assume that system \eqref{e1} is defined in $\mathbb R^3$ and $A$ is a
non--zero real number. The following statements hold.
\begin{itemize}
\item[$(a)$] System \eqref{e1} restricted to the center manifold has the origin as a center if and
only if it admits an analytic local inverse Jacobian multiplier of the form $J(x,y,z) = z+$\,{\rm higher order term}  in a neighborhood of the origin in $\mathbb R^3$. Moreover, if such an inverse Jacobian multiplier exists, then the analytic center manifold $\mathcal M^c\subset J^{-1}(0)$.
\item[$(b)$] If system \eqref{e1} restricted to the center manifold has the origin as a focus, then there exists a local $C^\infty$ and
non--flat inverse Jacobian multiplier of the form $J(x,y,z) = z(x^2+y^2)^k+$\,{\rm higher order term} with $k\ge 2$, in a neighborhood of the origin in $\mathbb R^3$. Moreover, there exists a local $C^\infty$ center manifold $\mathcal M$ such that  $\mathcal M\subset J^{-1}(0)$.
 \end{itemize}}

In this paper we will extend Buic\u{a} {\it et al}'s results to any finite dimensional differential system \eqref{e1}. We should say that this extension is not
trivial, because for higher dimensional differential systems we need new ideas and techniques than those in \cite{BGM12,BGM13}. Parts of the methods in
\cite{BGM12,BGM13} are only suitable for
three dimensional differential systems but not for higher dimensional ones.

Let $\lambda_3,\ldots,\lambda_n$ be the eigenvalues of the matrix $A$. Then system \eqref{e1} at the origin has the eigenvalues $\lambda=(\mathbf i,-\mathbf i,\lambda_3,\ldots,\lambda_n)$, where $\mathbf i=\sqrt{-1}$. Let
\[
\mathcal R=\left\{k\in\mathbb Z^n:\,\, \langle k,\lambda\rangle=0, \,k+e_j\in\mathbb Z_+^n,\, j=3,\ldots,n\right\},
\]
where $\mathbb Z_+$ denotes the set of non--negative integers, $e_j$ is the unit vector with its $j$th component equal to $1$ and the others all vanishing,
and $\langle k,\lambda\rangle=k_1\mathbf i-k_2\mathbf i+\sum\limits_{j=3}\limits^nk_j\lambda_j$. We remark that in the definition $\mathcal R$ we choose $k\in\mathbb Z^n$ but not $k\in\mathbb Z_+^n$, because we will also discuss the case $\langle k,\lambda\rangle=\lambda_j$ for $k\in\mathbb Z_+^n$ and $j\in\{1,\ldots,n\}$. 

In this paper we have a basic assumption.
\[
(H) \quad \mathcal R \mbox{ is one dimensional and }  A \mbox { can be diagonalizable in $\mb C$}.\qquad\qquad
\]

Clearly if $A$ has its eigenvalues either all having positive real parts or all having negative real parts, then
$\mc R$ has only one linearly  independent element with generator $(1,1,0)$. For three dimensional differential systems of the form \eqref{e1}, this condition
always holds provided that $A$ is a nonzero real number.

By the assumption $(H)$ we get easily that $\mbox{Re}\, \lambda_j\ne 0$ for $j=3,\ldots,n$. So from the Center Manifold Theorem we get that
system \eqref{e1} has a center manifold tangent to the $(x,y)$ plane at the origin, and it can be represented in \eqref{ec}.

In the case that $A$ has complex eigenvalues, we assume without loss of generality that there exists an $m\in \mathbb Z_+$ with $2m\le n-2$ such that $\lambda_{3+2j}$ and $\lambda_{3+2j+1}$, $j=0,1,\ldots,m-1$, are
conjugate complex eigenvalues of $A$. Of course if $m=0$ then all the eigenvalues are real.

Our first result provides an equivalent characterization on the center on the center manifold $\mathcal M^c$  at the origin via inverse Jacobian multipliers.
\begin{theorem}\label{t1}
Assume that the analytic differential system \eqref{e1} satisfies $(H)$ and the eigenvalues of $A$ either all having positive real parts or all having negative real parts. The following statements hold.
\begin{itemize}
\item[$(a)$] System \eqref{e1} restricted to $\mathcal M^c$ has the origin as a center if and only if the system has a local analytic inverse Jacobian multiplier of the form
    \begin{eqnarray}\label{*1}
    J(x,y,z)&=&\prod\limits_{j=0}\limits^{m-1}\left[(z_{3+2j}-p_{3+2j}(x,y,z))^2+(z_{3+2j+1}-p_{3+2j+1}(x,y,z))^2\right]\nonumber\\
     && \qquad \times\prod\limits_{l=3+2m}\limits^n (z_l-p_l(x,y,z))V(x,y,z),
    \end{eqnarray}
     in a neighborhood of the origin in $\mathbb R^n$, where $p_j=O(|(x,y,z)|^2)$ for $j=3,\ldots,n$, and $V(0,0,0)=1$. For $m=0$ the first product does not appear.
\item[$(b)$] If system \eqref{e1} has the inverse Jacobian multiplier as in statement $(a)$, then the center manifold $\mc M^c$
 is unique and analytic, and $\mathcal M^c\subset J^{-1}(0)$.
\end{itemize}
\end{theorem}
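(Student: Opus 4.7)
The plan is to prove both directions of (a) by reducing the system to a Poincar\'e--Dulac normal form in which an inverse Jacobian multiplier (IJM) can be written down or recognized directly, and to extract (b) from the factorization of $J$.

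For the ``only if'' direction of (a), assume the origin is a center on $\mathcal M^c$. Lyapunov's Center Theorem applies (its hypothesis that $A$ have no eigenvalues on the imaginary axis follows from $(H)$), yielding analyticity and uniqueness of $\mathcal M^c$ and an analytic first integral $\Phi=x^2+y^2+\cdots$. I would first change coordinates analytically by $\tilde z_j=z_j-h_j(x,y)$ to place $\mathcal M^c$ at $\{z=0\}$, then diagonalize $A$ over $\mathbb C$ by a linear change in the $z$-variables. Under $(H)$ the resonance lattice $\mathcal R$ is generated by $(1,1,0,\ldots,0)$, so the higher-dimensional analytic integrability-to-normalization theorems of \cite{LPW12,Zh08,Zh13} yield an analytic conjugation to
\begin{equation*}
\dot u=u(\mathbf i+a(uv)),\qquad \dot v=v(-\mathbf i+b(uv)),\qquad \dot w_j=w_j(\lambda_j+c_j(uv)),
\end{equation*}
where $u,v$ are the complexified planar coordinates and the $w_j$ are the eigencoordinates of $A$. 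The center hypothesis on $\mathcal M^c$ forces $a(s)+b(s)\equiv 0$ (so that $uv$ is a first integral on $\{w=0\}$), and a direct check then confirms that $\hat J=w_3\cdots w_n$ satisfies $\mathcal X(\hat J)=\hat J\,\mathrm{div}\,\mathcal X$. Pulling $\hat J$ back through the composition of all coordinate changes and recombining each complex conjugate pair $(w,\bar w)$ via $w\bar w=(\mathrm{Re}\,w)^2+(\mathrm{Im}\,w)^2$ produces the required form \eqref{*1}, with the analytic unit $V$ coming from the Jacobian of the transformation.

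For part (b), the assumed shape of $J$ directly produces an analytic center manifold. The equations $z_l-p_l(x,y,z)=0$, $l=3,\ldots,n$, have Jacobian equal to the identity at the origin (because $p_j=O(|(x,y,z)|^2)$), so the analytic implicit function theorem cuts out an analytic graph $\mathcal N=\{z=\eta(x,y)\}$ tangent to the $(x,y)$-plane. I would then show every factor of \eqref{*1} is an individual Darboux invariant, $\mathcal X(F)=F\,K$ with analytic cofactor $K$: because $V$ is an analytic unit and $\mathcal X(J)/J=\mathrm{div}\,\mathcal X$ is analytic, the logarithmic derivatives of the irreducible factors of $J$ must also be analytic, forcing the zero locus of each factor to be $\mathcal X$-invariant. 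Hence $\mathcal N$ is an analytic invariant manifold tangent to the center subspace, and since formal center manifolds are uniquely determined, $\mathcal N=\mathcal M^c\subset J^{-1}(0)$. The ``if'' direction of (a) then follows by flattening $\mathcal M^c$ to $\{z=0\}$ in analytic coordinates: $J$ takes the form (real-eigenvalue case) $z_3\cdots z_n\,\tilde V(x,y,z)$, and substituting into $\mathcal X(J)=J\,\mathrm{div}\,\mathcal X$, dividing by the monomial factor and restricting to $\{z=0\}$, shows that $V_c(x,y):=\tilde V(x,y,0)$ is a nonvanishing analytic inverse integrating factor for the restricted planar vector field; Reeb's Center Theorem then forces the origin to be a center on $\mathcal M^c$. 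The complex-pair case is analogous, the sum-of-squares blocks playing the role of two transverse factors.

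The main obstacle is the convergence of the analytic normalization in the ``only if'' direction: the spectrum $\{\mathbf i,-\mathbf i,\lambda_3,\ldots,\lambda_n\}$ lies in the Siegel domain, so generic small-divisor pathologies are present. The way past this is to combine the one-dimensionality of $\mathcal R$ given by $(H)$, the one-sided sign of $\mathrm{Re}\,\lambda_j$ assumed in the theorem, and the analytic Lyapunov first integral, which together bring the problem into the scope of the analytic integrability-to-normalization machinery of \cite{LPW12,Zh08,Zh13}. A secondary subtle point is the verification that each factor of \eqref{*1} is individually $\mathcal X$-invariant, which requires a careful comparison of vanishing orders along $\mathcal N$ in the identity $\mathcal X(J)=J\,\mathrm{div}\,\mathcal X$, together with the analyticity of $V$.
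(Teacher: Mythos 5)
Your overall architecture coincides with the paper's: for necessity, conjugate the system analytically to the Poincar\'e--Dulac normal form $\dot u=-u(\mathbf i+g_1)$, $\dot v=v(\mathbf i+g_2)$, $\dot w_j=w_j(\lambda_j+g_j)$, observe that the center hypothesis forces $g_1=g_2$ so that $w_3\cdots w_n$ is an inverse Jacobian multiplier, and pull it back (the unit $V$ being the reciprocal Jacobian determinant, as in Lemma \ref{l0}); for sufficiency, use coprimality of the factors to get Darboux invariance of each $\zeta_j-\phi_j$, apply the implicit function theorem to obtain an analytic invariant graph tangent to the $(x,y)$-plane, and show that $V$ restricts to a nonvanishing analytic inverse integrating factor on it, whence Reeb/Poincar\'e gives the center. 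Two execution differences are worth recording. First, by flattening the center manifold to $\{z=0\}$ \emph{before} restricting, you reduce the verification that $V|_{\mathcal M}$ is an inverse integrating factor of $\mathcal X|_{\mathcal M}$ to a one-line cancellation; the paper instead works on the curved manifold and needs the trace computations \eqref{et1}--\eqref{et6} of Lemma \ref{l4}. Your shortcut is legitimate and arguably cleaner, provided you note that the flattening is a near-identity analytic change so the transformed $J$ keeps the form \eqref{*1}.

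The one place where your proposal under-delivers is the convergence of the normalization in the necessity direction, which you correctly flag as the main obstacle but then discharge by citing the integrability-to-normalization theorems of \cite{LPW12,Zh08,Zh13}. Those results require (local) analytic integrability with the maximal number of functionally independent first integrals, whereas here the system possesses only the single Lyapunov integral $\Phi=x^2+y^2+\cdots$, so they do not apply off the shelf. The paper's Lemma \ref{l2}$(b)$ instead proceeds in two stages: Bibikov's quasi-normal form theory (\cite[Theorems 10.1, 10.2, 3.2 and $\S5$]{Bi79}) gives a convergent normalization to the partially normalized system \eqref{e2-1} using the center/first-integral hypothesis, and a second transformation straightening the $\zeta$-equations is shown to converge by the explicit small-divisor bound $\|\langle k,\lambda\rangle-p\mathbf i+\mathbf i-\lambda_j\|\ge\sigma>0$, which is exactly where the hypothesis that all $\mathrm{Re}\,\lambda_j$ have the same sign enters. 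You would need to supply this (or an equivalent) argument. A smaller point: your claim that ``formal center manifolds are uniquely determined'' only yields uniqueness among \emph{analytic} center manifolds; the uniqueness asserted in statement $(b)$ is obtained in the paper from Sijbrand's Theorems 6.3 and 7.1, using the existence of the analytic first integral together with the center.
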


We note that the set of matrices satisfying $(H)$ is a full Lebesgue measure subset in the set of real matrices of order $n$.

The second result shows the existence of $C^\infty$ smooth local inverse Jacobian multiplier provided that the origin on the center manifold is a focus.

\begin{theorem}\label{t2}
Assume that the differential system \eqref{e1} satisfies $(H)$. The following statements hold.
\begin{itemize}
\item[$(a)$] If system \eqref{e1} restricted to $\mathcal M^c$ has the origin as a focus, then the system has a local $C^\infty$
inverse Jacobian multiplier of the form
\begin{eqnarray}\label{*2}
    J(x,y,z)&=&\prod\limits_{j=0}\limits^{m-1}\left[(z_{3+2j}-p_{3+2j}(x,y,z))^2+(z_{3+2j+1}-p_{3+2j+1}(x,y,z))^2\right]\nonumber\\
     &&  \times\prod\limits_{s=3+2m}\limits^n (z_s-p_s(x,y,z))\left[(x-q_1(x,y,z))^2+(y-q_2(x,y,z))^2\right]^l\\
 && \times h\left((x-q_1(x,y,z))^2+(y-q_2(x,y,z))^2\right)V(x,y,z),\nonumber
    \end{eqnarray}
    in a neighborhood of the origin in $\mathbb R^n$, where $l\ge 2$,  $p_j, q_i=O(|(x,y,z)|^2)$, and $h(0)=V(0,0,0)=1$.

\item[$(b)$] There exists a local $C^\infty$ center manifold $\mathcal M$ such that $\mathcal M\subset J^{-1}(0)$.
\end{itemize}
\end{theorem}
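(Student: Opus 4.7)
My plan is to reduce by a $C^\infty$ diffeomorphism to a straightened canonical form in which all relevant invariant manifolds are coordinate planes, and then to assemble $J$ from elementary invariant building blocks, extending the three-dimensional scheme of Buic\u{a}--Garc\'ia--Maza. Under $(H)$, the invariant manifold theorem together with the one-dimensionality of $\mc R$ produces through the origin: a $C^\infty$ center manifold $\mc M$ tangent to the $(x,y)$-plane; for each real hyperbolic eigenvalue $\lambda_s$ ($s=3+2m,\ldots,n$) a $C^\infty$ codimension-one invariant hypersurface realizable as $\{z_s=p_s(x,y,z)\}$; and for each complex-conjugate pair $(\lambda_{3+2j},\lambda_{3+2j+1})$ a $C^\infty$ real codimension-two invariant submanifold whose local defining equation is $(z_{3+2j}-p_{3+2j})^2+(z_{3+2j+1}-p_{3+2j+1})^2=0$ (the complexified invariant graphs being $z_{3+2j}-p_{3+2j}\pm\mathbf i(z_{3+2j+1}-p_{3+2j+1})$). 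Since these $n-1$ submanifolds are pairwise transverse at the origin, a single $C^\infty$ diffeomorphism $\Phi$ simultaneously straightens them, sending $\mc M$ to $\{Z=0\}$ and each hyperbolic submanifold to the corresponding coordinate plane.

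In the straightened coordinates the restriction to $\mc M=\{Z=0\}$ is a planar system with the origin as a focus, so by the planar-focus conclusion of the Buic\u{a}--Garc\'ia--Maza theorem applied on the invariant plane (resting on a Garc\'ia--Giacomini--Gin\'e style construction) this planar system admits a non-flat $C^\infty$ inverse integrating factor of the form
\[
V_0(X,Y)=(X^2+Y^2)^l\,h_0(X^2+Y^2),\qquad l\ge 2,\quad h_0(0)=1,
\]
with $l$ the vanishing multiplicity at the focus. Setting
\[
J_0=\prod_{j=0}^{m-1}\bigl(Z_{3+2j}^2+Z_{3+2j+1}^2\bigr)\,\prod_{s=3+2m}^{n} Z_s\cdot V_0(X,Y),
\]
the identity $\mc X(J_0)=J_0\,\mbox{div}\,\mc X$ holds on $\{Z=0\}$: each factor $Z_s$ is invariant with cofactor $\lambda_s+O(|(X,Y,Z)|)$, each sum of squares has cofactor $2\,\mbox{Re}\,\lambda_{3+2j}+O(|(X,Y,Z)|)$, and these leading cofactors sum to $\mbox{tr}\,A$, while $V_0$ supplies the divergence of the planar focus along $\mc M$. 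Off $\{Z=0\}$ I correct by a $C^\infty$ factor $\wt V(X,Y,Z)=1+O(|Z|)$ determined from the transport equation $\mc X(\wt V)=\wt V\,(\mbox{div}\,\mc X-\mc X(\log J_0))$: one solves order by order in the monomials $Z^\alpha$ (the operator on level $\alpha$ being multiplication by $\alpha_3\lambda_3+\cdots+\alpha_n\lambda_n\ne 0$, which is exactly the non-resonance furnished by $(H)$), and then Borel-sums the resulting formal series to a $C^\infty$ germ. Pulling back $J_0\wt V$ by $\Phi^{-1}$ restores the functions $z_s-p_s(x,y,z)$ and the planar factors $x-q_1(x,y,z)$, $y-q_2(x,y,z)$, producing the form \eqref{*2}; taking $\mc M=\Phi^{-1}(\{Z=0\})$ gives part $(b)$.

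The principal obstacle, as already in dimension three, is producing the planar inverse integrating factor on $\mc M$ with the clean factorization $V_0=(X^2+Y^2)^l h_0(X^2+Y^2)$ and establishing its non-flatness: $l$ must be extracted from the vanishing order of the smooth Poincar\'e first-return map at the focus, and non-flatness requires a finite-order Bautin-type analysis valid in the $C^\infty$ (not analytic) category. A secondary difficulty, specific to $n>3$, is the simultaneous $C^\infty$ straightening of all $n-1$ invariant submanifolds and the Borel-type solution of the transport equation for $\wt V$; both are underwritten by the non-resonance encoded in $(H)$, so the technical payload is verifying that this hypothesis really does rule out every small divisor appearing in either construction.
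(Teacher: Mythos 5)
Your overall strategy---straighten the invariant manifolds, build $J$ as a product of the invariant factors times a planar inverse integrating factor on the center manifold, and then correct by a cofactor $\wt V$---is genuinely different from the paper's, and it has two gaps that are not cosmetic. First, your transport equation for $\wt V$ is only solved \emph{formally}: solving order by order in $Z^\alpha$ and then invoking Borel summation produces a $C^\infty$ germ whose Taylor series satisfies the equation, but the function itself satisfies $\mc X(\wt V)=\wt V\,(\mbox{div}\,\mc X-\mc X(\log J_0))$ only modulo a flat error, so $J_0\wt V$ is an inverse Jacobian multiplier only up to a flat discrepancy, and you give no mechanism (e.g.\ integration along the hyperbolic flow, a homotopy/path method) to remove it. Second, the quantitative heart of statement $(a)$---that the planar factor has the clean form $(X^2+Y^2)^l h_0(X^2+Y^2)$ with $l\ge 2$ and is non-flat---is precisely what you defer as ``the principal obstacle''; without it neither \eqref{*2} nor the later use of $l$ in the Hopf bifurcation theorem is available. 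A similar remark applies to the simultaneous $C^\infty$ straightening of the $n-1$ invariant submanifolds: their existence for each individual eigendirection does not follow from the standard invariant manifold theorem and is essentially equivalent in difficulty to the smooth normal form statement you are trying to avoid.

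The paper resolves all three points with one tool: since the origin is a focus on $\mc M^c$, one has $\mbox{Re}\,g_1\ne 0$ in the Poincar\'e--Dulac normal form, which places the vector field outside Belitskii's exceptional set, so the distinguished normalization of \eqref{e1-1} to \eqref{e2} is genuinely $C^\infty$ (Lemma \ref{l2}$(a)$). In the normal form coordinates one checks directly that $\wt J=w_3\cdots w_n\,uv\,(g_2(uv)-g_1(uv))$ is an exact inverse Jacobian multiplier; the focus hypothesis forces $g_2-g_1$ to be non-flat of order $\ge 1$ in $uv$, which is exactly where $l\ge 2$ and the factorization $(uv)^l h(uv)$ come from; and pulling back through the $C^\infty$ normalization via Lemma \ref{l0} yields \eqref{*2}, with statement $(b)$ then following from Lemma \ref{l5}. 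I would recommend you either adopt the normal-form route or, if you want to keep your construction, supply (i) a proof that the flat error in the transport equation can be eliminated and (ii) an independent derivation of the non-flat planar factor with $l\ge 2$ on a merely $C^\infty$ center manifold.
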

We call $l$ {\it vanishing multiplicity} of the inverse Jacobian multiplier.

Next we will study the Hopf bifurcation of system \eqref{e1} under small perturbations through inverse Jacobian multipliers. In this direction the first study
is due to Buic\u{a}, Garc\'{\i}a and Maza \cite{BGM13} for a three dimensional differential system.

Consider an analytic perturbation of system \eqref{e1} in the following form
\begin{eqnarray}\label{e5}
\dot x&=&-y+g_1(x,y, z,\ve)=G_1(x,y,z,\ve),\nonumber\\
\dot y&=&\,\,\,\,\, x+g_2(x,y, z,\ve)=G_2(x,y,z,\ve),\\
\dot z&=& Az+g(x,y,z,\ve)=G(x,y,z,\ve),\nonumber
\end{eqnarray}
where $\ve\in \R^m$ is an $m$ dimensional parameter and $\|\ve\|\ll 1$, $\mf g:=(g_1,g_2,g)=O(|(x,y,z)|)$ are analytic in a neighborhood of the origin,
and $\mf g(x,y,z,0)=\mf f(x,y,z)$ with $\mf f$ defined in \eqref{e1}. These conditions make sure that the origin is always a singularity of system
\eqref{e5} for all $\|\ve\|\ll 1$. In addition, in order to keep the monotone property of the origin, we assume that the determinant of the Jacobian matrix of
$\mf G=(G_1,G_2,G)$ with  respect to $(x,y,z)$ at the origin has the eigenvalues
\[
\alpha(\ve)\pm \mf i,\quad \lambda_j+\mu_j(\ve),\qquad j=3,\ldots,n,
\]
satisfying $\alpha(0)=\mu_j(0)=0$. For convenience we denote by $\mc X_\ve$ the vector field associated to \eqref{e5}. Then $\mc X_0=\mc X$.

Next we shall study the Hopf bifurcation of system \eqref{e5} at the origin when the parameters $\ve$ vary near $0\in \mathbb R^m$.
That is, when the values of $\ve$ change, the stability of the origin of system \eqref{e5}  will probably change, and so there bring appearance or disappearance
of small amplitude limit cycles of system \eqref{e5} which are bifurcated from the origin, i.e. if $\ve$ tend to $0$ these limit cycles
will approach to the origin. The maximal number of limit cycles which can be bifurcated from the Hopf at the origin of systems \eqref{e5} is called {\it cyclicity} of  system \eqref{e1} at the origin under the perturbation \eqref{e5}. Denote this number by Cycl$(\mc X_\ve,0)$.

Now we can state our third result on the Hopf bifurcation.

\begin{theorem}\label{t3}
Assume that the analytic differential system \eqref{e1} satisfies $(H)$. If system \eqref{e1} restricted to $\mathcal M^c$ has the origin as a focus,
then \mbox{\rm Cycl}$(\mc X_\ve,0)=l-1$, where $l$ is the vanishing multiplicity of the inverse Jacobian multiplier defined in Theorem \ref{t2}.
\end{theorem}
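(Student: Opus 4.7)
The plan is to reduce the higher-dimensional Hopf bifurcation problem to a planar Hopf bifurcation problem on a $\ve$-parametrized family of center manifolds, and then to invoke the sharp cyclicity bound via inverse integrating factors established for planar systems by Buic\u{a}, Garc\'{\i}a and Maza in \cite{BGM13}. Under $(H)$ the matrix $A$ is hyperbolic, so the Center Manifold Theorem with parameters produces, for $\|\ve\|\ll 1$, a $C^\infty$ family of two-dimensional center manifolds $\mc M^c_\ve$ of $\mc X_\ve$ depending $C^\infty$-smoothly on $\ve$, with $\mc M^c_0$ a $C^\infty$ center manifold of $\mc X$ as in Theorem \ref{t2}. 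Since the linear part of $\mc X_\ve|_{\mc M^c_\ve}$ has eigenvalues $\alpha(\ve)\pm\mf i$ with $\alpha(0)=0$, this restriction is of focus-or-center type at a point $\mf p(\ve)$ close to $0$ with $\mf p(0)=0$.

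Next I would establish a parametric version of Theorem \ref{t2}. Its proof solves a cascade of homological equations whose solvability at each order is controlled by focus-type quantities of $\mc X|_{\mc M^c}$; the defining condition that the first non-vanishing focus quantity at $\ve=0$ has index $2l+1$ is open, and the Borel--Whitney extension producing the $C^\infty$ remainder carries through with $\ve$ as an additional parameter. This yields a $C^\infty$ family $J_\ve$ of inverse Jacobian multipliers for $\mc X_\ve$ of the form \eqref{*2}, with $p_j^\ve, q_i^\ve, h_\ve, V_\ve$ depending $C^\infty$ on $\ve$ and with the same exponent $l$ as in the unperturbed multiplier. Dividing $J_\ve$ by the transverse factors that cut out $\mc M^c_\ve$ and restricting, one obtains the $C^\infty$ inverse integrating factor
\[
W_\ve(x,y)=\bigl[(x-q_1^\ve)^2+(y-q_2^\ve)^2\bigr]^{l}\,h_\ve\!\left((x-q_1^\ve)^2+(y-q_2^\ve)^2\right) V_\ve(x,y,z)\Big|_{\mc M^c_\ve}
\]
of the planar system $\mc X_\ve|_{\mc M^c_\ve}$, vanishing to order $l$ at $\mf p(0)$ in the unperturbed limit.

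The upper bound then follows from the planar theorem of \cite{BGM13}: the vanishing multiplicity $l$ of $W_0$ at the unperturbed focus forces the leading behavior of the unperturbed displacement function $d_0(r)$ of the Poincar\'e return map on $\mc M^c_0$, specifically that $d_0(r)$ vanishes to order $2l-1$ at $r=0$; a Weierstrass preparation / Malgrange division argument applied to the analytic family $d_\ve(r)$ then shows that for all $\|\ve\|\ll 1$ the number of small positive zeros of $d_\ve$ (which correspond to small amplitude limit cycles) is uniformly bounded by $l-1$, hence $\mbox{Cycl}(\mc X_\ve,0)\le l-1$. The key intermediate identity expresses a ratio of values of $W_\ve$ along an orbit of the Poincar\'e map in terms of $d_\ve$, and the vanishing multiplicity of $W_0$ translates directly into the vanishing order of $d_0$ at $r=0$.

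For the matching lower bound I would apply a Bautin-type argument: the focus quantities $V_3^\ve,\ldots,V_{2l-1}^\ve$ of $\mc X_\ve|_{\mc M^c_\ve}$ all vanish at $\ve=0$ while $V_{2l+1}^\ve$ remains non-zero, so by making the $V_{2j+1}^\ve$ take small prescribed values of alternating sign one bifurcates exactly $l-1$ small amplitude limit cycles from $\mf p(\ve)$. The main obstacle will be realizing arbitrary prescribed small values of the vector $(V_3^\ve,\ldots,V_{2l-1}^\ve)$ by choosing the perturbation $\mf g$ in \eqref{e5}: one must translate planar perturbations of the reduced field back to admissible perturbations of the $n$-dimensional system through the smooth dependence of $\mc M^c_\ve$ on $\ve$, and then verify a submersivity statement for the map $\ve\mapsto (V_3^\ve,\ldots,V_{2l-1}^\ve)$. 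A secondary technical hurdle is the rigorous parametric $C^\infty$ construction of $J_\ve$, together with the accompanying bookkeeping for the transverse factors that cut out $\mc M^c_\ve$.
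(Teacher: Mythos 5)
Your strategy---reduce to a planar problem on a $\ve$-dependent family of center manifolds and quote the planar/inverse-integrating-factor cyclicity theory---is genuinely different from the paper's, which never restricts the perturbed system to a center manifold: the paper passes to cylindrical coordinates $z=rs$, works with the full $(n-1)$-dimensional \emph{analytic} Poincar\'e map of \eqref{e3.4}, eliminates the $s$-variables by the Implicit Function Theorem (using that $e^{2\pi A}-E$ is invertible), and extracts the order $2l-1$ of the reduced displacement function $d(r_0,0)$ from the identity $J_c(0,\mc P(r_0,s_0,0))=J_c(0,r_0,s_0)\,D\mc P(r_0,s_0,0)$. That route keeps everything analytic, so the Weierstrass Preparation Theorem applies directly. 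Your route loses analyticity at the first step: $\mc M^c_\ve$ is only $C^\infty$, hence so is the reduced displacement function, and ``Weierstrass preparation applied to the analytic family $d_\ve(r)$'' is not available; the whole upper bound would have to be run through Malgrange preparation in the $C^\infty$ category, which is feasible but is precisely the delicate point rather than a footnote. A second concrete error: the asserted parametric family $J_\ve$ of the form \eqref{*2} ``with the same exponent $l$'' cannot exist for those $\ve$ with $\alpha(\ve)\ne 0$ --- the origin is then a hyperbolic focus on $\mc M^c_\ve$, the displacement function vanishes only to first order, and by the correspondence between the vanishing multiplicity of an inverse integrating factor and the order of the displacement function \cite{GGG10} the multiplier cannot vanish to order $2l$ there. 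Fortunately your upper bound only uses $W_0$ and the smooth dependence of $d_\ve$ on $\ve$, so this false intermediate claim is removable.

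The more serious gap is the lower bound. You reduce it to a submersivity statement for the map $\ve\mapsto(V_3^\ve,\ldots,V_{2l-1}^\ve)$ and explicitly leave it as ``the main obstacle,'' but this is exactly the content that must be supplied: $\mbox{\rm Cycl}(\mc X_\ve,0)=l-1$ requires exhibiting an admissible perturbation of \eqref{e1} within the class \eqref{e5} that actually produces $l-1$ limit cycles. The paper resolves this with the explicit one-parameter radial perturbation \eqref{e11}, $h(u,v,\ve)=\sum_{s=1}^{l-1}\ve^{l-s}a_s(u^2+v^2)^s$, for which the angular and $s$-equations are unchanged while the radial equation acquires the term $\sum_{s=1}^{l-1}\ve^{l-s}a_sr^{2s+1}$; the coefficients of $d(r_0,\ve)$ are then controlled directly and, for suitable choices of $a_1,\ldots,a_{l-1}$, yield $l-1$ small positive simple zeros as in \cite{BGM13,GGG11}. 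Without such a construction (or a proof of your submersivity claim), your argument establishes only the inequality $\mbox{\rm Cycl}(\mc X_\ve,0)\le l-1$.
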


This result is an extension of the main result of \cite{BGM13} to any finite dimensional differential systems.

For the real differential system \eqref{e1} there always exists an invertible linear transformation which sends $A$ to its Jordan normal form.
So in what follows we assume without loss of generality that $A$ in system \eqref{e1} is in the real Jordan normal form.

In the rest of this paper we will prove our main results. In the next section we will prove Theorems \ref{t1} and \ref{t2}. The proof of Theorem \ref{t3} will
be given in Section \ref{s3}.

\medskip
\section{Proof of Theorems \ref{t1} and \ref{t2}}\label{s2}

\setcounter{section}{2}
\setcounter{equation}{0}\setcounter{theorem}{0}

\subsection{Preparation to the proof}

For simplifying notations we will use conjugate complex coordinates instead of the real ones which correspond to conjugate complex eigenvalues of
the linear part of system \eqref{e1} at the origin.

Set $\xi=x+\mathbf i y,\, \eta=x-\mathbf iy$. Since $A$ is real, if it has complex eigenvalues, they should appear in pair. Corresponding
to a pair of conjugate complex eigenvalues of $A$, the associated coordinates are $z_j$ and $z_{j+1}$ by assumption. Then instead of this pair of real coordinates
we choose a pair of conjugate complex coordinates $\zeta_j=z_j+\mathbf i z_{j+1}$ and $\zeta_{j+1}=z_j-\mathbf i z_{j+1}$.
Under these new coordinates system \eqref{e1} can be written in
\begin{eqnarray}\label{e1-1}
\dot \xi &=& - \mathbf i \xi+\widetilde f_1(\xi,\eta,\zeta)=\widetilde F_1(\xi,\eta,\zeta),\nonumber\\
\dot \eta &=&\,\,\,\,\,\mathbf i \eta +\widetilde  f_2(\xi,\eta,\zeta)=\widetilde F_2(\xi,\eta,\zeta),\\
\dot \zeta &=& B \zeta+ \widetilde f(\xi,\eta,\zeta)=\widetilde F(\xi,\eta,\zeta),\nonumber
\end{eqnarray}
with $B=\mbox{diag}(\lambda_3,\ldots,\lambda_n)$, where we have used the assumption $(H)$ and the fact that $A$ is in the real Jordan normal form.
Denote by $\widetilde {\mathcal X}$ the vector field associated to system \eqref{e1-1}. We note that system \eqref{e1-1} is different from system \eqref{e1} only
in a rotation. But using the coordinates $(\xi,\eta,\zeta)$, some expressions will be simpler than in the coordinates $(x,y,z)$. This idea was first
introduced in \cite{Zh11}.

First we recall a basic fact on inverse Jacobian multipliers of vector fields under transformations, which will be used in the full paper.
\begin{lemma}\label{l0}
Let $\mathcal X$ be the vector field associated to system \eqref{e1} and $J$ be an inverse Jacobian multiplier of $\mathcal X$.
Under an invertible smooth transformation of coordinates $(x,y,z)=\Phi(u,v,w)$, the vector field $\mathcal X$ becomes
\[
\dot {\mathbf w}=(D\Phi(\mathbf w))^{-1}\mathbf F\circ\Phi(\mathbf w),
\]
where $\mathbf F=(F_1,F_2,F)^{tr}$ and $\mf w=(u,v,w)^{tr}$. Then this last system has an inverse Jacobian multiplier
$\widetilde J(\mathbf w)=\frac{J(\Phi(\mathbf w))}{D\Phi(\mathbf w)}$.
\end{lemma}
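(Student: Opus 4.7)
The plan is to reformulate the inverse Jacobian multiplier identity as the invariance of a top-degree volume form, for which the change-of-variables law is completely transparent. The defining identity $\mathcal X(J)=J\,\mbox{div}\,\mathcal X$ is equivalent to $\sum_{i}\partial_{x_i}(F_i/J)=0$, and this in turn is equivalent to $\mathcal L_{\mathcal X}\omega=0$ for the top-degree form $\omega:=J^{-1}\,dx\wedge dy\wedge dz_3\wedge\cdots\wedge dz_n$, since for any smooth $f$ one has $\mathcal L_{\mathcal X}(f\,dx\wedge dy\wedge dz_3\wedge\cdots\wedge dz_n)=\mbox{div}(f\mathbf F)\,dx\wedge dy\wedge dz_3\wedge\cdots\wedge dz_n$. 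I would record this reformulation as the first step.

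Next I would compute the pull-back of $\omega$ under $\Phi$. Applying the standard rule $\Phi^*(dx\wedge dy\wedge dz_3\wedge\cdots\wedge dz_n)=\det(D\Phi(\mathbf w))\,du\wedge dv\wedge dw_3\wedge\cdots\wedge dw_n$, I obtain
\[
\Phi^*\omega=\frac{\det D\Phi(\mathbf w)}{J(\Phi(\mathbf w))}\,du\wedge dv\wedge dw_3\wedge\cdots\wedge dw_n=\frac{1}{\widetilde J(\mathbf w)}\,du\wedge dv\wedge dw_3\wedge\cdots\wedge dw_n,
\]
where $\widetilde J(\mathbf w)=J(\Phi(\mathbf w))/\det D\Phi(\mathbf w)$ is precisely the function proposed in the statement (I read the symbol $D\Phi(\mathbf w)$ in the denominator of the lemma as the Jacobian determinant, which is the only sensible interpretation).

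Since $\Phi$ is a diffeomorphism and conjugates $\mathcal X$ to the vector field $\widetilde{\mathcal X}=(D\Phi)^{-1}\mathbf F\circ\Phi$ displayed in the lemma, naturality of the Lie derivative gives $\mathcal L_{\widetilde{\mathcal X}}(\Phi^*\omega)=\Phi^*(\mathcal L_{\mathcal X}\omega)=0$. Translating this back to coordinates via the same identity $\mathcal L_Y(g\,du\wedge dv\wedge dw_3\wedge\cdots\wedge dw_n)=\mbox{div}(gY)\,du\wedge dv\wedge dw_3\wedge\cdots\wedge dw_n$ yields $\mbox{div}(\widetilde{\mathbf F}/\widetilde J)=0$, which is the inverse Jacobian multiplier identity for $\widetilde{\mathcal X}$.

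I do not anticipate a genuine obstacle: the invertibility of $\Phi$ guarantees that $\det D\Phi$ is nowhere zero on the domain, so $\widetilde J$ is well defined and inherits the regularity of $J$. The computation could equivalently be carried out directly in coordinates via the chain rule, but the volume-form viewpoint makes the appearance of the factor $\det D\Phi$ transparent and keeps the algebra short; this is the approach I would present.
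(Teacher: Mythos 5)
Your argument is correct. Note that the paper does not actually prove Lemma \ref{l0}: it is ``recalled'' as a basic fact with no argument supplied, so there is no proof of record to compare against. Your volume-form formulation --- encoding the multiplier condition as $\mathcal L_{\mathcal X}\bigl(J^{-1}\,dx\wedge dy\wedge dz_3\wedge\cdots\wedge dz_n\bigr)=0$ and then using the pull-back rule for top-degree forms together with naturality of the Lie derivative --- is a clean and standard way to establish the transformation law, and it correctly identifies $D\Phi$ in the statement as the Jacobian determinant (as the paper itself clarifies immediately after the lemma). The only point worth making explicit is that you pass freely between the identity $\mathcal X(J)=J\,\mathrm{div}\,\mathcal X$ and the divergence-free condition $\mathrm{div}(\mathbf F/J)=0$; these agree only where $J\neq 0$, and since the multipliers in this paper vanish on the center manifold you should add the one-line remark that the identity $\widetilde{\mathcal X}(\widetilde J)=\widetilde J\,\mathrm{div}\,\widetilde{\mathcal X}$, once established on the open set where $\widetilde J\neq 0$, extends to the closure by continuity (and holds trivially on any open set where $\widetilde J\equiv 0$). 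With that remark included, the proof is complete.
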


Recall that hereafter we use $D\Phi$ to denote the determinant of the Jacobian matrix of $\Phi$ with respect to its variables.

In the proof of our main results we need the Poincar\'e--Dulac normal form theorem. For an analytic or formal differential system in $\mathbb R^n$
or $\mathbb C^n$
\begin{equation}\label{nfe1}
\dot x=Cx+f(x),
\end{equation}
with $C$ in the Jordan normal form, and $f(x)$ has no constant and linear part, the Poincar\'e--Dulac normal formal theorem shows that system \eqref{nfe1} can always be transformed to a system of the form
\[
\dot y=Cy+g(y),
\]
through a near identity transformation $x=y+\psi(y)$ with $\psi(0)=0$ and $\partial\psi(0)=0$, where $g(y)$ contains resonant terms only,
and $\partial\psi(y)$ denotes the Jacobian matrix of $\psi$ with respect to $y$. Recall that a monomial $y^ke_j$ in the $j$th component of $g(y)$
is {\it resonant} if $\mu_j=\langle k,\mu\rangle$, where $\mu=(\mu_1,\ldots,\mu_n)$ are the eigenvalues of $C$. The transformation from \eqref{nfe1}
to its normal form is called {\it normalization}.  Usually the normalization is not unique. If a normalization contains only non--resonant terms, then
it is called {\it distinguished normalization}. Distinguished normalization is unique. A monomial $x^k$ in a normalization or in a function is {\it resonant}
if $\langle k, \mu\rangle=0$.

In our case, by the Poincar\'e--Dulac normal form theorem we have the following result.

\begin{lemma}\label{l1}
Under the assumption $(H)$ system \eqref{e1-1} is formally equivalent to
\begin{eqnarray}\label{e2}
\dot u&=&- u(\mathbf i+g_1(uv)),\nonumber\\
\dot v&=&\,\,\,\,\,  v(\mathbf i+g_2(uv)),\\
\dot w_j&=&\, \,  w_j(\lambda_j+g_j(uv)) ,\qquad j=3,\ldots,n,\nonumber
\end{eqnarray}
through a distinguished normalization of the form
$(x,y,z)=(u,v,w)+ \ldots$,
where dots denote the higher order terms.
\end{lemma}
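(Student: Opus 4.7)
The plan is to invoke the Poincar\'e--Dulac normal form theorem recalled just above for the complex diagonal system \eqref{e1-1}, and then to use hypothesis $(H)$ to pin down which monomials are resonant.

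First I apply Poincar\'e--Dulac. Since $A$ is diagonalizable over $\mb C$, the linear part $\mbox{diag}(-\mf i,\mf i,\lambda_3,\ldots,\lambda_n)$ of \eqref{e1-1} is already in Jordan form, so the theorem provides a formal near-identity transformation $(\xi,\eta,\zeta)=(u,v,w)+\psi(u,v,w)$ with $\psi=O(|(u,v,w)|^2)$ bringing \eqref{e1-1} to a formal system whose right-hand side contains only resonant monomials. Requiring $\psi$ itself to carry no resonant monomials makes this normalization distinguished and hence unique.

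Next I classify the resonances. A monomial $u^{k_1}v^{k_2}w^{k'}$ placed in the $\ell$-th equation is resonant iff $\mu_\ell=\langle(k_1,k_2,k'),\lambda\rangle$, where $\mu_\ell$ is the linear eigenvalue of that equation. Setting $m=(k_1,k_2,k')-e_\ell$, this becomes the pair of conditions $\langle m,\lambda\rangle=0$ and $m+e_\ell\in\mb Z_+^n$. Under $(H)$, which in particular forces $\mbox{Re}\,\lambda_j\ne 0$ for every $j\ge 3$, every such admissible $m$ must be a non-negative integer multiple of $(1,1,0,\ldots,0)$, the generator of $\mc R$. Granting this, back-substitution produces precisely the resonant monomials $u(uv)^s$, $v(uv)^s$ and $w_j(uv)^s$ in the $u$-, $v$- and $w_j$-equations respectively, with $s\in\mb Z_+$. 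Collecting them as $u\,g_1(uv)$, $v\,g_2(uv)$ and $w_j\,g_j(uv)$ (with $g_1,g_2,g_j$ formal power series vanishing at $0$, a property inherited from $\mf f=O(|(x,y,z)|^2)$) reproduces exactly \eqref{e2}.

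The main obstacle is the lattice-theoretic claim just used, which must be extracted from the one-dimensionality of $\mc R$ alone. The argument proceeds by contradiction: if some admissible $m$ with $\langle m,\lambda\rangle=0$ were $\mb Q$-independent from $(1,1,0,\ldots,0)$, then its $w$-component would necessarily be non-zero (otherwise $\langle m,\lambda\rangle=(m_2-m_1)\mf i=0$ would already force $m\in\mb Z\cdot(1,1,0,\ldots,0)$), and a suitable perturbation of $m$ by a large multiple of $(1,1,0,\ldots,0)$ would place the resulting vector inside $\mc R$ while preserving $\mb Q$-independence from the generator, contradicting $(H)$. Once this step is in hand, the normal form \eqref{e2} follows at once from the classical Poincar\'e--Dulac theorem together with the uniqueness of the distinguished normalization.
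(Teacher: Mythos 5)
Your proposal is correct and follows the same route the paper intends: the paper states Lemma \ref{l1} as a direct consequence of the Poincar\'e--Dulac normal form theorem, and your argument simply supplies the resonance classification that the paper leaves implicit, namely that under $(H)$ every resonant multi-index $m+e_\ell$ with $\langle m,\lambda\rangle=0$ must have $m$ a non-negative integer multiple of the generator $(1,1,0,\ldots,0)$ of $\mathcal R$, which yields exactly the monomials $u(uv)^s$, $v(uv)^s$, $w_j(uv)^s$. Your shift-by-$N(1,1,0,\ldots,0)$ argument to place a putative independent resonance inside $\mathcal R$ is a valid way to make that reduction precise.
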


About the smoothness of the transformation in Lemma \ref{l1} we have the following results.

\begin{lemma}\label{l2}
Under the assumption $(H)$, for system \eqref{e1-1} to its Poincar\'e-Dulac normal form \eqref{e2}
the following statements hold.
\begin{itemize}
\item[$(a)$] If system \eqref{e1-1} restricted to the center manifold $\mathcal M^c$ has the origin as a focus, then the distinguished normalization
 is $C^\infty$.
\item[$(b)$] If system \eqref{e1-1} restricted to $\mathcal M^c$ has the origin as a center, and the eigenvalues of $A$ have either all positive real parts or all negative real parts, then the distinguished normalization is analytic.
\end{itemize}
\end{lemma}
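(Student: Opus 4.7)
The plan is to treat parts (a) and (b) in parallel by first straightening the center manifold $\mathcal{M}^c$ and normalizing the induced two-dimensional dynamics there, and then removing non-resonant monomials in the transverse $\zeta$-directions using the hyperbolicity of $B$. Lemma \ref{l0} ensures that the distinguished character of the normalization is preserved under the successive intermediate transformations, and uniqueness of the distinguished normalization guarantees that the composition is the one produced by Lemma \ref{l1}.

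For part (b), I would begin by applying the Lyapunov center Theorem to system \eqref{e1-1}, which yields a unique analytic center manifold $\mathcal{M}^c$ together with an analytic local first integral $\Phi(\xi,\eta,\zeta)=\xi\eta+\ldots$ of the full system. An analytic change of coordinates straightens $\mathcal{M}^c$ to $\{\zeta=0\}$, and on this manifold Poincar\'e's classical theorem puts the reduced planar system into the analytic normal form $\dot u=-\mathbf{i}u(1+g_1(uv))$, $\dot v=\mathbf{i}v(1+g_2(uv))$. Under $(H)$ together with the assumption that all $\lambda_j$ have real parts of the same sign, $\mathcal R$ is generated by $(1,1,0,\ldots,0)$, so the only resonant monomials in the $\zeta$-components are of the form $w_j(uv)^k$; combining this resonance structure with the analytic first integral $\Phi$ places the system in the regime of the higher-dimensional analogue of Poincar\'e's theorem established in \cite{LPW12,Zh08,Zh13}, which asserts the equivalence between analytic integrability and analytic normalization to the Poincar\'e--Dulac normal form, thereby yielding the analytic distinguished normalization.

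For part (a), the Center Manifold Theorem provides a $C^\infty$ manifold $\mathcal{M}^c$, which a $C^\infty$ diffeomorphism straightens to $\{\zeta=0\}$. On the two-dimensional reduced system the origin is a focus, and the classical $C^\infty$ normalization of planar foci supplies a $C^\infty$ transformation carrying the reduced system into the required form $\dot u=-\mathbf{i}u(1+g_1(uv))$, $\dot v=\mathbf{i}v(1+g_2(uv))$. In the transverse directions the eigenvalues $\lambda_3,\ldots,\lambda_n$ all have nonzero real parts and the resonance lattice is still generated by $(1,1,0,\ldots,0)$, so a Sternberg--Chen type $C^\infty$ partial linearization removes all non-resonant monomials from the $\zeta$-equations. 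The main obstacle lies in part (b): small-divisor issues preclude a direct majorant argument for analytic convergence, and one must verify that the Lyapunov first integral together with $(H)$ supplies precisely the analytic integrability hypothesis needed by \cite{LPW12,Zh08,Zh13}. In contrast, in part (a) the infinite flatness of the non-resonant remainder at a focus, combined with $C^\infty$ hyperbolic normalization in the $\zeta$-directions, avoids any small-divisor obstruction and reduces the argument to piecing together standard smooth normal form results.
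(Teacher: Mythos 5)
Your outline (straighten $\mathcal M^c$, normalize the reduced planar system, then clean up the transverse $\zeta$-directions) is reasonable, but in both parts the decisive step is delegated to citations that do not cover the situation, and in part (b) the step you yourself flag as ``the main obstacle'' is exactly the one left unproved. The normalization theorems of \cite{LPW12,Zh08,Zh13} concern analytically \emph{integrable} systems, with integrability meaning a full complement of functionally independent first integrals; here the Lyapunov center Theorem produces exactly one analytic first integral, and since $\mathcal R$ is one dimensional no further independent ones can exist, so those theorems cannot simply be invoked. The paper instead proceeds through Bibikov's quasi--normal form: Theorem 10.1 of \cite{Bi79} gives a distinguished normalization to \eqref{e2-1} (normalized in $u,v$ only, with transverse remainders $h_j(u,v,\rho)$ vanishing at $\rho=0$); the center hypothesis forces $g_1=g_2$ by \cite{Zh08}, and Theorems 10.2, 3.2 and $\S 5$ of \cite{Bi79} then give convergence of that normalization. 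A second, separate transformation $\rho=w+\varphi(u,v,w)$ carries \eqref{e2-1} to \eqref{e2}, and its convergence is where the hypothesis that all eigenvalues of $A$ have real parts of the same sign really enters: it yields a uniform lower bound $\sigma>0$ on the nonzero denominators $\langle k,\lambda\rangle-p\mathbf i+q\mathbf i-\lambda_j$, so no small divisors occur. In your argument the same-sign hypothesis is used only to identify the generator of $\mathcal R$; without the lower bound on the denominators the formal transformation removing the non-resonant transverse terms has no reason to converge, so the analyticity claim in (b) is not established.

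In part (a), a ``Sternberg--Chen type partial linearization'' is a statement about hyperbolic fixed points, whereas the origin here is not hyperbolic (it carries the pair $\pm\mathbf i$), so that tool does not apply off the shelf. The paper's mechanism is Belitskii's theorem \cite{Be86} on $C^\infty$ normalization of germs with a pair of purely imaginary eigenvalues; the hypothesis to verify is that the vector field lies outside Belitskii's exceptional set, and this is precisely where the focus assumption is used (it forces $\mbox{Re}\,g_1\ne 0$ in \eqref{e2}). Your proposal never identifies this condition, and ``infinite flatness of the non-resonant remainder'' is not a substitute for it. Finally, the remark that uniqueness of the distinguished normalization makes your composite transformation coincide with it is backwards: a composite of straightening and partial normalizations will in general contain resonant terms, hence is \emph{not} the distinguished normalization, and one must still argue that the distinguished one inherits the same regularity.
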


\begin{proof} $(a)$  We note that $u$ and $v$ are conjugate in \eqref{e2}, we have $g_2=\overline g_1$. Since the origin of system \eqref{e2} on $w=0$ is a focus, it
follows that $\mbox{Re}\, g_1\ne 0$. So our vector fields \eqref{e2} are outside the exception set which was defined on page 254 of \cite{Be86}.
Hence we get from Theorem 1 of Belitskii \cite{Be86} that the distinguished normalization from systems \eqref{e1-1} to \eqref{e2} is
$C^\infty$.

\noindent $(b)$ Since the eigenvalues of $A$ have non--vanishing real parts, we have
\[
\lambda_j\ne k_1 (-\mathbf i)+k_2 \mathbf i=(k_2-k_1)\mathbf i,   \quad k_1,\,k_2\in\mathbb Z_+\quad \mbox{ for } j=3,\ldots,n.
\]
So by Theorem 10.1 of \cite{Bi79}, system \eqref{e1-1} is formally equivalent to
\begin{eqnarray}\label{e2-1}
\dot u&=&- u(\mathbf i+g_1(uv)),\nonumber\\
\dot v&=&\,\,\,\,\,  v(\mathbf i+g_2(uv)),\\
\dot \rho_j&=&\, \, \lambda_j \rho_j+h_j(u,v,\rho) ,\qquad j=3,\ldots,n,\nonumber
\end{eqnarray}
with $g_1,g_2=o(1)$, $h_j=O(|(u,v,\rho)|^2)$ and $h_j(u,v,0)=0$ for $j=3,\ldots,n$,  through a distinguished normalization of the form
\[
\xi=u+\psi_1(u,v,\rho),\quad \eta=v+\psi_2(u,v,\rho),\quad \zeta=\rho+\psi(u,v),
\]
where $\rho=(\rho_3,\ldots,\rho_n)$ and $\psi=(\psi_3,\ldots,\psi_n)$ with $\psi_1,\psi_2,\psi=O(|(u,v,\rho)|^2)$.
System \eqref{e2-1} is called a {\it quasi--normal form} of system \eqref{e1-1}, see \cite{Bi79}.

By the assumption system \eqref{e2-1}  has the origin as a center on the center manifold $w=0$ and so has a formal first integral.
By Zhang \cite{Zh08} we get that $g_1(uv)=g_2(uv)$ in \eqref{e2-1}.
Applying Theorems 10.2, 3.2 and $\S 5$ of \cite{Bi79} to our case, we get that the distinguished normalization from system \eqref{e1-1} to \eqref{e2-1} is
convergent. This means that systems \eqref{e1-1} and \eqref{e2-1} are analytically equivalent through a near identity change of variables.

Next we prove that system \eqref{e2-1} is analytically equivalent to system \eqref{e2}. Take the change of variables
\[
u=u,\quad v=v,\quad \rho=w+\varphi(u,v,w),
\]
for which system \eqref{e2-1} is transformed to \eqref{e2}. Then we have
\begin{eqnarray}\label{e2-2}
\frac{\partial \varphi }{\p w}Bw-\mf i\frac{\p \vp}{\p u} u+\mf i \frac{\p \vp}{\p v} v-B\vp&=&
Bh(u,v,w+\vp(u,v,w))\nonumber \\
&& -\frac{\p \vp}{\p w}wg+ug_1\frac{\p \vp}{\p u}-vg_2\frac{\p \vp }{\p v},
\end{eqnarray}
where $wg=(w_3g_3,\ldots,w_ng_n)^{tr}$, and we look $\vp$ as a column vector and $\frac{\p \vp}{\p w}$ is the Jacobian matrix of $\vp$ with respect to
$w$. The linear operator
\[
L=\frac{\partial  }{\p w}Bw-\mf i\frac{\p }{\p u} u+\mf i \frac{\p }{\p v} v-B,
\]
has the spectrum
\[
\{\langle k,\lambda\rangle-p\mf i+1\mf i-\lambda_j:\, k\in \mb Z_+^{n-2}, |k|=l,p,q\in \mb Z_+,  j=3,\ldots,n\},
 \]
in the linear space $\mc H^{l+p+q}$ which consists of $n-2$ dimensional vector valued homogeneous polynomials of degree $l$ in $w$ and of degrees $p$ and $q$
in $u$ and $v$, respectively.

Expanding $\vp, h, g_1, g_2$ and $g$ in the Taylor series, and equating the homogeneous terms in \eqref{e2-2} which have the same degree, we get from
induction and the assumption $(H)$ that equations \eqref{e2-2} have a formal series solution $\vp$ with its monomials all nonresonant.
Moreover, by the assumption that $A$ has its eigenvalues either all having positive real parts or all having negative real parts,  there exists a
number $\sigma>0$ such that if $\langle k,\lambda\rangle-p\mf i+1\mf i-\lambda_j\ne 0$ for $(p,l,k)\in\mb Z_+^n$  we have
\[
\|\langle k,\lambda\rangle-p\mf i+1\mf i-\lambda_j\|\ge \sigma.
\]
This shows that $\vp $ in the transformation does not contain small denominators. Then similar to the proof of the classical Poincar\'e--Dulac normal form theorem
we can prove that $\vp$ is convergent, see for instance \cite{Bi79,Zh08}, where similar proofs on convergence of $\vp$ were provided.
This proves statement $(b)$, and consequently the lemma.
\end{proof}

Next result shows the existence of analytic integrating factor on the center manifold provided the existence of analytic inverse Jacobian multiplier
of system \eqref{e1-1} in a neighborhood of the origin.

\begin{lemma}\label{l4}
Assume that system \eqref{e1-1} has an analytic inverse Jacobian multiplier of the form
\[
J(\xi,\eta,\zeta)=(\zeta_3-\phi_3(\xi,\eta,\zeta))\ldots (\zeta_n-\phi_n(\xi,\eta,\zeta))V(\xi,\eta,\zeta),
\]
with $\phi_j=O(|(\xi,\eta,\zeta)|^2)$ for $j\in \{3,\ldots,n\}$ and $V$ analytic, and  $V(0,0,0)\ne 0$.
Then
\begin{itemize}
\item[$(a)$] $\mc M=\bigcap\limits_{j=3}\limits^n\{\zeta_j=\phi_j(\xi,\eta,\zeta)\}$ is an invariant analytic center
manifold of $\widetilde{\mathcal X}$ in a neighborhood of the origin.
\item[$(b)$] $V|_{\mc M}$ is an analytic inverse integrating factor of $\widetilde{\mathcal X}|_{\mc M}$.
\end{itemize}
\end{lemma}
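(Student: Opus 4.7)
The plan is to extract the center manifold from the zero divisor of $J$ via the implicit function theorem, to exploit the inverse Jacobian multiplier identity together with the irreducibility of each $\Psi_j := \zeta_j - \phi_j$ to force individual invariance of each hypersurface $\{\Psi_j=0\}$, and then to restrict the ambient identity to $\mc M$ to obtain the inverse integrating factor relation.

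For $(a)$, set $\Psi_j(\xi,\eta,\zeta) := \zeta_j - \phi_j(\xi,\eta,\zeta)$. Since $\phi_j = O(|(\xi,\eta,\zeta)|^2)$, the block $\partial(\Psi_3,\ldots,\Psi_n)/\partial(\zeta_3,\ldots,\zeta_n)$ equals the identity at the origin, so the implicit function theorem produces an analytic graph $\zeta = h(\xi,\eta)$ with $h(0,0) = 0$ and $\partial h(0,0) = 0$ that parameterizes $\mc M$ locally, making $\mc M$ an analytic $2$-dimensional manifold tangent to the $(\xi,\eta)$-plane. For invariance, substitute $J = V \prod_j \Psi_j$ into $\widetilde{\mc X}(J) = J\,\mathrm{div}\,\widetilde{\mc X}$, expand by Leibniz, and rearrange to
\[
V \sum_{j} \Big(\prod_{k \neq j}\Psi_k\Big) \widetilde{\mc X}(\Psi_j) = \Big(\prod_j \Psi_j\Big)\bigl( V\,\mathrm{div}\,\widetilde{\mc X} - \widetilde{\mc X}(V)\bigr).
\]
Fix $j_0$: the right-hand side and every left-hand summand with $j \neq j_0$ carries the factor $\Psi_{j_0}$, so $\Psi_{j_0}$ must divide $V\bigl(\prod_{k \neq j_0}\Psi_k\bigr)\widetilde{\mc X}(\Psi_{j_0})$ in the local ring of analytic germs. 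Because $\{\Psi_{j_0}=0\}$ is smooth at the origin, $\Psi_{j_0}$ is irreducible; since $V$ is a unit and each $\Psi_k$ with $k \neq j_0$ has gradient at the origin in a different coordinate direction (hence cuts out a transverse smooth hypersurface), $V$ and the remaining $\Psi_k$ are coprime to $\Psi_{j_0}$ in this unique factorization domain. Therefore $\widetilde{\mc X}(\Psi_{j_0}) = \Psi_{j_0} R_{j_0}$ for some analytic $R_{j_0}$, each hypersurface $\{\Psi_j = 0\}$ is $\widetilde{\mc X}$-invariant, and hence so is $\mc M = \bigcap_j\{\Psi_j = 0\}$. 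Combined with the analyticity and tangency above, this establishes that $\mc M$ is an analytic invariant center manifold.

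For $(b)$, dividing the expanded identity by $\prod_j \Psi_j$ (valid in the integral domain of germs) gives the ambient relation
\[
V \sum_j R_j + \widetilde{\mc X}(V) = V\,\mathrm{div}\,\widetilde{\mc X}. \qquad (\ast)
\]
Restricting $(\ast)$ to $\mc M$, the invariance-based chain rule gives $\widetilde{\mc X}(V)|_{\mc M} = \widetilde{\mc X}|_{\mc M}(V|_{\mc M})$ in $(\xi,\eta)$-coordinates on $\mc M$. The remaining identification
\[
\mathrm{div}(\widetilde{\mc X})\big|_{\mc M} = \mathrm{div}(\widetilde{\mc X}|_{\mc M}) + \sum_j R_j\big|_{\mc M}
\]
becomes transparent in coordinates adapted to $\mc M$: taking $\tilde\zeta_j := \Psi_j$, the manifold $\mc M$ becomes $\{\tilde\zeta = 0\}$ and the transverse components factor as $\widetilde{\mc X}(\tilde\zeta_j) = \tilde\zeta_j R_j$, so both divergences decompose by the product rule and differ on $\{\tilde\zeta = 0\}$ by exactly $\sum_j R_j|_{\mc M}$. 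Substituting into $(\ast)|_{\mc M}$ and cancelling $V|_{\mc M}\sum_j R_j|_{\mc M}$ yields $\widetilde{\mc X}|_{\mc M}(V|_{\mc M}) = V|_{\mc M}\,\mathrm{div}(\widetilde{\mc X}|_{\mc M})$, the inverse integrating factor identity on $\mc M$.

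The main obstacle is the divergence bookkeeping in $(b)$: the ambient divergence restricted to $\mc M$ exceeds the intrinsic divergence on $\mc M$ precisely by the sum of the invariance rates $R_j$, an identity that is clean in the adapted coordinates provided by the $\Psi_j$ themselves, but would require careful chain-rule tracking through the graph $\zeta = h(\xi,\eta)$ if carried out in the original coordinates. Once this relation is in place, the passage from $(\ast)$ to the inverse integrating factor identity is purely algebraic, inherited from the factored structure of $J$.
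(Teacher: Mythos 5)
Your part $(a)$ is correct and is essentially the paper's own argument: the Leibniz expansion of $\widetilde{\mc X}(J)=J\,\mbox{div}\,\widetilde{\mc X}$, coprimality of the irreducible germs $\zeta_j-\phi_j$ and the unit $V$ in the local ring of analytic germs to get $\widetilde{\mc X}(\zeta_j-\phi_j)=L_j(\zeta_j-\phi_j)$, and the Implicit Function Theorem for the graph and the tangency. The gap is in $(b)$, at precisely the step you flag as the main obstacle and then dismiss as ``transparent in adapted coordinates.'' The divergence is not invariant under the change of variables $\Phi:(\xi,\eta,\zeta)\mapsto(\xi,\eta,\Psi)$, because $\det D\Phi=\det(E-\p_\zeta\phi)$ is not constant: for $Y=\Phi_*\widetilde{\mc X}$ one has $(\mbox{div}\,Y)\circ\Phi=\mbox{div}\,\widetilde{\mc X}+\widetilde{\mc X}\bigl(\log\det(E-\p_\zeta\phi)\bigr)$. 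Your product-rule computation does prove $\mbox{div}\,Y|_{\{\tilde\zeta=0\}}=\mbox{div}(\widetilde{\mc X}|_{\mc M})+\sum_jR_j|_{\mc M}$, but what $(\ast)$ needs is the same identity for $\mbox{div}\,\widetilde{\mc X}|_{\mc M}$; the two differ by $\widetilde{\mc X}(\log\det(E-\p_\zeta\phi))|_{\mc M}$, which you never show to vanish. This is exactly the bookkeeping the paper refuses to shortcut: it stays in the original coordinates and evaluates $\sum_j L_j|_{\mc M}$ via the trace identities \eqref{et1}--\eqref{et4}, carrying $(E-\p_\zeta\phi)^{-1}$ and the relations \eqref{e1-50}--\eqref{e1-51} throughout.

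The missing term is not harmless, so the gap is essential rather than cosmetic. For $n=3$ consider $\dot\xi=-\xi(\mf i+g(\xi\eta))$, $\dot\eta=\eta(\mf i+g(\xi\eta))$, $\dot\zeta=\zeta\bigl(\lambda+g_3(\xi\eta)+\tfrac{(\mf i+g)(\eta-\xi)}{2+\xi+\eta}\bigr)$, for which one checks directly that $J=\zeta$ is an analytic inverse Jacobian multiplier. Setting $\sigma=(\xi+\eta)/2$, the factorization $J=\bigl(\zeta-\tfrac{\zeta\sigma}{1+\sigma}\bigr)(1+\sigma)$ satisfies every hypothesis of the lemma, with $\phi=\zeta\sigma/(1+\sigma)=O(|(\xi,\eta,\zeta)|^2)$ and $V=1+\sigma$, $V(0,0,0)=1$; here $\mc M=\{\zeta=0\}$, the restricted field is divergence free, yet $V|_{\mc M}=1+\sigma$ is not a first integral and hence not an inverse integrating factor of the restriction. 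In this example $\widetilde{\mc X}(\log\det(1-\p_\zeta\phi))|_{\mc M}=-\widetilde{\mc X}(\sigma)/(1+\sigma)\ne0$ is exactly the discrepancy your argument drops. The lesson is that the identity you need is sensitive to the non-unique choice of the factors $\zeta_j-\phi_j$, so it cannot follow from coordinate bookkeeping alone; a correct proof of $(b)$ must either impose a normalization on the $\phi_j$ (e.g.\ the specific factorization produced by the Poincar\'e--Dulac normalization, which is how the lemma is actually used in the paper) or establish $\widetilde{\mc X}(\log\det(E-\p_\zeta\phi))|_{\mc M}=0$ for the factorization at hand.
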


\begin{proof} $(a)$
By the expression of $J$ we get from $\widetilde{\mathcal X}(J)=J\mbox{div}\widetilde{\mathcal X}$ that
\begin{eqnarray*}
&&\sum\limits_{j=3}\limits^n \wt{ \mc X}(\zeta_j-\phi_j)(\zeta_3-\phi_3)\ldots \widehat{(\zeta_j-\phi_j)}\ldots (\zeta_n-\phi_n)V(\xi,\eta,\zeta)\\
&&\quad +(\zeta_3-\phi_3)\ldots  (\zeta_n-\phi_n)\wt{ \mc X}(V)=(\zeta_3-\phi_3)\ldots  (\zeta_n-\phi_n)V\mbox{div}\wt{ \mc X},
\end{eqnarray*}
where $\widehat{(\zeta_j-\phi_j)}$ denotes its absence in the product. Since $\zeta_3-\phi_3,\ldots,\zeta_n-\phi_n$ are relatively pairwise
coprime in the algebra of analytic functions which are defined in a neighborhood of the origin, so there exist analytic functions
\[
L_0(\xi,\eta,\zeta),\,\,\,\, L_3(\xi,\eta,\zeta),\,\,\,\, \ldots,\,\,\,\,  L_n(\xi,\eta,\zeta),
\]
such that
\begin{equation}\label{e1-52}
\begin{array}{rcl}
\widetilde{\mathcal X}(V(\xi,\eta,\zeta))&=& L_0(\xi,\eta,\zeta)V(\xi,\eta,\zeta),\\
\widetilde{\mathcal X}(\zeta_j-\phi_j(\xi,\eta,\zeta))&=& L_j(\xi,\eta,\zeta)(\zeta_j-\phi_j(\xi,\eta,\zeta)),
\end{array}
\end{equation}
for $j=3,\ldots, n$.
This shows that $\zeta_j=\phi_j(\xi,\eta,\zeta)$, $j=3,\ldots,n$, are invariant under the flow of $\widetilde{\mathcal X}$.

Applying the Implicit Function Theorem to the equations
\[
\zeta_j-\phi_j(\xi,\eta,\zeta)=0,\quad j=3,\ldots,n,
\]
we get a unique solution $\zeta=k(\xi,\eta)$, i.e.
\[
\zeta_j=k_j(\xi,\eta),\quad j=3,\ldots,n,
\]
in a neighborhood of the origin, which is analytic. Hence
\[
\mc M=\bigcap\limits_{j=3}\limits^n\{\zeta_j=k_j(\xi,\eta)\},
\]
in a neighborhood of the origin. Again the Implicit Function Theorem shows that
 $k_j(0,0)=0$ and $\p_\xi k_j(0,0)=\p_\eta k_j(0,0)=0$ for $j=3,\ldots,n$. These imply that $\mc M$ is an analytic center manifold of $\widetilde{\mathcal X}$
 in a neighborhood of the origin which is tangent to the $(\xi,\eta)$ plane.

\noindent $(b)$ Since
\[
\widetilde{\mathcal X}(\zeta_j-\phi_j(\xi,\eta,\zeta))=0 \quad \mbox{ on }  \mc M,
\]
we have
\begin{eqnarray*}
&& \widetilde F_j(\xi,\eta,k(\xi,\eta))-\wt F_1(\xi,\eta,k(\xi,\eta))\frac{\partial \phi_j}{\partial \xi}-\wt F_2(\xi,\eta,k(\xi,\eta)) \frac{\partial \phi_j}{\partial \eta}\\
&& \qquad\qquad\quad\qquad -\p_\zeta\phi_j \wt F(\xi,\eta,k(\xi,\eta))=0,\qquad j=3,\ldots,n.
\end{eqnarray*}
Here we have used the conventions $\p_\zeta\phi_j=(\p_{\zeta_3}\phi_j,\ldots,\p_{\zeta_n}\phi_j)$ and $\wt F=(\wt F_3,\ldots,\wt F_n)^{tr}$.
Write these equations in a unified vector form, we have
\begin{equation}\label{e1-50}
\left(E-\p_\zeta \phi\right)\wt F=\wt F_1\p_\xi \phi+\wt F_2\p_\eta \phi\qquad \mbox{ on }\,\, \mc M,
\end{equation}
where $\p_s\phi=(\p_s \phi_3,\ldots,\p_s\phi_n)^{tr}$, $s\in\{\xi,\eta\}$.

In addition, since
\[
k(\xi,\eta)=\phi(\xi,\eta,k(\xi,\eta)),
\]
we have
\begin{equation}\label{e1-51}
\left(E-\p_\zeta\phi\right)\p_\xi k=\p_\xi\phi,\quad \left(E-\p_\zeta\phi\right)\p_\eta k=\p_\eta\phi,
\end{equation}
where $\p_sk=(\p_s k_3,\ldots,\p_sk_n)^{tr}$, $s\in\{\xi,\eta\}$.

Set $C(\xi,\eta)=V(\xi,\eta,k(\xi,\eta))$. Some calculations show that
\begin{eqnarray}\label{e1-5}
\wt{\mathcal X}|_{\mc M}(C(\xi,\eta))&=&\wt F_1(\xi,\eta,k(\xi,\eta))\frac{\partial C}{\partial \xi}+\wt F_2(\xi,\eta,k(\xi,\eta))\frac{\partial C}{\partial \eta}\nonumber\\
&=&\wt F_1[w]\left(\frac{\partial V}{\partial \xi}+\frac{\partial V}{\partial \zeta_3}\frac{\partial k_3}{\partial \xi}+\ldots+\frac{\partial V}{\partial \zeta_n}\frac{\partial k_n}{\partial \xi}\right)\nonumber\\
&& +
\wt F_2[w]\left(\frac{\partial V}{\partial \eta}+\frac{\partial V}{\partial \zeta_3}\frac{\partial k_3}{\partial \eta}+\ldots+\frac{\partial V}{\partial \zeta_n}\frac{\partial k_n}{\partial \eta}\right)\\
&=&\wt F_1[w]\frac{\partial V}{\partial \xi} +  \wt F_1[w]\p_\zeta V\left(E-\p_\zeta \phi\right)^{-1}\p_\xi\phi\nonumber\\
&&\left.+\wt F_2[w]\frac{\partial V}{\partial \eta}+\wt F_2[w]\p_\zeta V\left(E-\p_\zeta \phi\right)^{-1}\p_\eta\phi\right|_{\mc M}\nonumber\\
&=&
\left.\wt F_1[w]\frac{\partial V}{\partial \xi} +\wt F_2[w]\frac{\partial V}{\partial \eta}+\p_\zeta V \wt F[w]\right|_{\mc M}\nonumber\\
&=&\wt{ \mathcal X}(V)|_{\mc M}=L_0V|_{\mc M}=L_0|_{\mc M} C,\nonumber
\end{eqnarray}
where $[w]=(\xi,\eta,k(\xi,\eta))$, and in  the third and fourth equalities we have used respectively \eqref{e1-51} and \eqref{e1-50}. Recall that $\p_\zeta V=
(\p_{\zeta_3}V,\ldots,\p_{\zeta_n}V)$.

Next we shall prove that $L_0|_{\mc M}=\mbox{div}(\wt {\mathcal X}|_{\mc M})$. From the definition of inverse Jacobian multipliers and \eqref{e1-52}, we get that
\begin{eqnarray*}
&& J\mbox{div}\wt{\mathcal X}=\wt{\mathcal X}(J)=(L_0+L_3+\ldots+L_n)J.
\end{eqnarray*}
This reduces to
\begin{equation}\label{e1-53}
L_0=\mbox{div}\wt{\mathcal X}-L_3-\ldots-L_n.
\end{equation}
Note that for $ j=3,\, \ldots,\, n$
\begin{eqnarray*}
 L_j(\zeta_j-\phi_j(\xi,\eta,\zeta))&=&\wt{\mathcal X} (\zeta_j-\phi_j(\xi,\eta,\zeta))\\
&=& \wt F_j-\wt F_1\p_\xi \phi_j-\wt F_2\p_\eta\phi_j-\p_\zeta\phi_j \wt F.
\end{eqnarray*}
Writing these equations in vector form gives
\begin{equation}\label{e1-55}
\mbox{diag}(L_3,\ldots,L_n)\left(\zeta-\phi(\xi,\eta,\zeta)\right)=\left(E-\p_\zeta\phi\right)\wt F-\p_\xi\phi \,\wt F_1-\p_\eta\phi\,\wt F_2.
\end{equation}
Recall that $\wt F,\,\p_\xi\phi,\, \p_\eta\phi$ are $n-2$ dimensional column vectors.

On the center manifold $\mc M$ we have
\[
\phi_j(\xi,\eta,\zeta)=\zeta_j, \qquad  j=3,\, \ldots,\, n.
\]
So from these we get that
\[
\p_\xi\p_{\zeta_s}\phi_j=\p_\eta\p_{\zeta_s}\phi_j=\p_{\zeta_s}\p_{\zeta_l}\phi_j=0 \,\,\, \mbox{ on } \, \mc M,\quad \mbox{for all } 3\le s,j,l\le n.
\]
Differentiating \eqref{e1-55} with respect to $\zeta$, together with these last equalities, yield
\begin{eqnarray*}
&&\mbox{diag}(L_3,\ldots,L_n)\left(E-\p_\zeta\phi\right)\\
&&=\left(E-\p_\zeta\phi\right)\p_\zeta\wt F-\p_\xi\phi \,\p_\zeta\wt F_1-\p_\eta\phi\,\p_\zeta\wt F_2\qquad \mbox{ on }\, \mc M.
\end{eqnarray*}
We note that $\p_\zeta\wt F$ is a matrix of order $n-2$, and $\p_\zeta\wt F_s$ for $s=1,2$ are $n-2$ dimensional horizontal vectors.
Rewrite this last equation in the following form
\begin{eqnarray}\label{et1}
\mbox{diag}(L_3,\ldots,L_n)&=&\left(E-\p_\zeta\phi\right)\p_\zeta\wt F\left(E-\p_\zeta\phi\right)^{-1}\nonumber\\
&& -\p_\xi\phi \,\p_\zeta\wt F_1\left(E-\p_\zeta\phi\right)^{-1}-\p_\eta\phi\,\p_\zeta\wt F_2\left(E-\p_\zeta\phi\right)^{-1}.
\end{eqnarray}
Since similar matrices have the same trace, we have
\begin{equation}\label{et2}
\mbox{trace}\left(\left(E-\p_\zeta\phi\right)\p_\zeta\wt F\left(E-\p_\zeta\phi\right)^{-1}\right)=\mbox{trace}(\p_\zeta\wt F)
=\sum\limits_{j=3}\limits^n\p_{\zeta_j}\wt F_j.
\end{equation}
Moreover some calculations show that
\begin{eqnarray}\label{et3}
\mbox{trace}\left(\p_\xi\phi \,\p_\zeta\wt F_1\left(E-\p_\zeta\phi\right)^{-1}\right)&=&
\mbox{trace}\left(\left(E-\p_\zeta\phi\right)^{-1}\p_\xi\phi\, \p_\zeta\wt F_1\right)\nonumber\\
&=&\p_\zeta\wt F_1\left(E-\p_\zeta\phi\right)^{-1}\p_\xi\phi,
\end{eqnarray}
and
\begin{eqnarray}\label{et4}
\mbox{trace}\left(\p_\eta\phi \,\p_\zeta\wt F_2\left(E-\p_\zeta\phi\right)^{-1}\right)&=&
\mbox{trace}\left(\left(E-\p_\zeta\phi\right)^{-1}\p_\eta\phi\, \p_\zeta\wt F_2\right)\nonumber\\
&=&\p_\zeta\wt F_2\left(E-\p_\zeta\phi\right)^{-1}\p_\eta\phi.
\end{eqnarray}
Combining \eqref{et1}, \eqref{et2}, \eqref{et3} and \eqref{et4} gives
\[
L_3+\ldots+L_n=\sum\limits_{j=3}\limits^n\p_{\zeta_j}\wt F_j
-\p_\zeta\wt F_1\left(E-\p_\zeta\phi\right)^{-1}\p_\xi\phi
-\p_\zeta\wt F_2\left(E-\p_\zeta\phi\right)^{-1}\p_\eta\phi.
\]
This together with \eqref{e1-53} show that
\begin{eqnarray}\label{et6}
L_0|_{\mc M}&=& \left.\p_\xi \wt F_1+\p_\eta\wt F_2+\p_\zeta\wt F_1\left(E-\p_\zeta\phi\right)^{-1}\p_\xi\phi\right.\nonumber \\
&& \left. +\p_\zeta\wt F_2\left(E-\p_\zeta\phi\right)^{-1}\p_\eta\phi\right|_{\mc M}\nonumber\\
&=& \left.\p_\xi \wt F_1+\p_\eta\wt F_2+\p_\zeta\wt F_1\,\p_\xi k
+\p_\zeta\wt F_2\,\p_\eta k\right|_{\mc M}  \\
&=&\p_\xi \wt F_1(\xi,\eta,k(\xi,\eta))+\p_\eta \wt F_2(\xi,\eta,k(\xi,\eta))=\mbox{div}\mathcal (\wt {\mc X}|_{\mc M}),\nonumber
\end{eqnarray}
where in the second equality we have used \eqref{e1-51}.

Now the equalities \eqref{e1-5} and \eqref{et6} verify that $C(\xi,\eta)$ is an analytic inverse integrating factor of the vector field
$\wt{\mathcal X} |_{\mc M}$.

We complete the proof of the lemma.
\end{proof}

\begin{remark} \label{l3}
Replacing analyticity by $C^\infty$ smoothness Lemma \ref{l4} holds, too.
\end{remark}

We now study the properties of $C^\infty$ inverse Jacobian multiplier restricted to center manifolds.

\begin{lemma}\label{l5}
Assume that system \eqref{e1} satisfies $(H)$ and has a $C^\infty$ inverse Jacobian multiplier, written in conjugate complex coordinates as
\[
J(\xi,\eta,\zeta)=\prod\limits_{j=3}\limits^n(\zeta_j-\psi_j(\xi,\eta,\zeta))V(\xi,\eta,\zeta),
\]
where $\psi_j=O(|(\xi,\eta,\zeta)|^2)$ and $V$ has no factor $\zeta_l-\psi_l(\xi,\eta,\zeta)$ for any $l\in\{3,\ldots,n\}$. Then the following statements hold.
\begin{itemize}
\item[$(a)$] $\mathcal M^*=\bigcap\limits_{j=3}\limits^n\{\zeta_j=\psi_j(\xi,\eta,\zeta)\}$ is a center manifold of system \eqref{e1} at the origin.
\item[$(b)$] For any smooth center manifold $\mc M$ of system \eqref{e1} at the origin, if ${\mathcal X}|_{\mathcal M}$ has the origin as a center, then $J|_{\mathcal M}=0$.
\end{itemize}
\end{lemma}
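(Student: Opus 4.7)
For part~$(a)$ I adapt the proof of Lemma~\ref{l4} to the smooth setting. Expanding $\wt{\mc X}(J)=J\,\mbox{div}\,\wt{\mc X}$ with $J=\prod_{j=3}^n(\zeta_j-\psi_j)V$ and using that the smooth hypersurfaces $\{\zeta_j=\psi_j(\xi,\eta,\zeta)\}$ are pairwise transversal at the origin (the matrix $E-\p_\zeta\psi$ equals the identity there) together with the hypothesis that $V$ carries no factor $\zeta_l-\psi_l$, I expect to extract---via the smooth division (Malgrange) theorem applied on each hypersurface---the invariance relations $\wt{\mc X}(\zeta_j-\psi_j)=L_j(\zeta_j-\psi_j)$ for some $C^\infty$ functions $L_j$. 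The Implicit Function Theorem, legitimate because the Jacobian $E-\p_\zeta\psi$ is the identity at $0$, then yields a unique smooth solution $\zeta=k(\xi,\eta)$ of $\zeta=\psi(\xi,\eta,\zeta)$ with $k(0)=0$ and $\p_\xi k(0)=\p_\eta k(0)=0$, so $\mc M^*=\{\zeta=k(\xi,\eta)\}$ is a $C^\infty$ invariant manifold tangent to the $(\xi,\eta)$ plane at the origin, i.e., a center manifold.

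For part~$(b)$ the plan is to identify $\mc M$ with the $\mc M^*$ produced in part~$(a)$; once this is done $J|_{\mc M}=0$ is immediate since every factor $\zeta_j-\psi_j$ vanishes identically on $\mc M^*$ by construction. The identification proceeds through Lyapunov's center theorem: system~\eqref{e1} is analytic, and $(H)$ guarantees that $A$ has no eigenvalues with vanishing real parts, so the hypothesis that $\wt{\mc X}|_{\mc M}$ has the origin as a center yields, by the only-if direction of Lyapunov's theorem, the existence of a local analytic first integral of the form $\xi\eta+\cdots$. Part~$(b)$ of that theorem then gives that the center manifold is unique and analytic. Both $\mc M$ and $\mc M^*$ are $C^\infty$ center manifolds at the origin, so necessarily $\mc M=\mc M^*$, and the conclusion follows.

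The main technical obstacle lies in part~$(a)$: the clean coprimality arguments available for analytic germs in the proof of Lemma~\ref{l4} must be replaced, in the $C^\infty$ setting, by transversality of the zero loci together with the smooth division theorem; I expect the hypothesis that $V$ contains no factor $\zeta_l-\psi_l$ to be used precisely to ensure that the residual factor $\prod_{l\ne j}(\zeta_l-\psi_l)V$ does not vanish on a dense subset of $\{\zeta_j=\psi_j\}$, which is what allows the division step to go through. Once the invariance of each hypersurface $\{\zeta_j=\psi_j\}$ is in hand, the remaining ingredients---Implicit Function Theorem, Lyapunov uniqueness, and the factorization of $J$---assemble directly.
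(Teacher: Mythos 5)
Your proof is correct, but only part $(a)$ follows the paper's route; part $(b)$ is a genuinely different argument. For $(a)$ the paper does exactly what you propose: it carries the argument of Lemma \ref{l4} over to the $C^\infty$ setting to obtain invariance relations $\wt{\mc X}(\zeta_j-\psi_j)=L_j\,(\zeta_j-\psi_j)$ and then applies the Implicit Function Theorem to get $\zeta=k(\xi,\eta)$ tangent to the $(\xi,\eta)$ plane; your explicit appeal to smooth division and to the non-vanishing of $\prod_{l\ne j}(\zeta_l-\psi_l)V$ on a dense subset of each hypersurface merely fills in what the paper compresses into ``as in the proof of Lemma \ref{l4}'' (do note that in the $C^\infty$ category ``$V$ has no factor $\zeta_l-\psi_l$'' does not by itself forbid a flat $V$ from vanishing on an open set, so this density claim deserves a word; the paper elides the same point). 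For $(b)$, the paper never identifies $\mc M$ with $\mc M^*$: it takes an arbitrary $P_0\in\mc M$, observes that its orbit $\varphi_t$ is periodic with period $T_0=2\pi+O(r)$ because the origin is a center on $\mc M$, integrates $\mc X(J)=J\,\mathrm{div}\,\mc X$ over one period to get $J(P_0)=J(P_0)\exp\bigl(\int_0^{T_0}\mathrm{div}\,\mc X|_{\varphi_s}\,ds\bigr)$, and notes that the exponent equals $2\pi\,\mathrm{tr}\,A+O(|P_0|)\ne 0$ (hypothesis $(H)$ forces $\mathrm{tr}\,A\ne 0$, since otherwise $(0,0,1,\dots,1)$ would give a second generator of $\mc R$), whence $J(P_0)=0$. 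Your route instead invokes the only-if direction of the Lyapunov center theorem together with its uniqueness clause to conclude $\mc M=\mc M^*$, where $J$ vanishes trivially; this is sound under the standing analyticity of \eqref{e1} and the fact that $(H)$ rules out eigenvalues of $A$ with zero real part. The trade-off: the paper's argument is self-contained, applies to any center manifold without knowing it is unique, and exhibits the divergence-over-a-period mechanism reused in Section \ref{s3}; yours is shorter and leans on part $(a)$ plus the deeper fact that a center forces the center manifold to be unique and analytic.
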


\begin{proof} $(a)$  As in the proof of Lemma \ref{l4} there exist $C^\infty$ smooth functions $L_3,\ldots,L_n$ such that
\[
\wt{\mc X}(\zeta_j-\psi_j(\xi,\eta,\zeta))=L_j(\xi,\eta,\zeta)(\zeta_j-\psi_j(\xi,\eta,\zeta), \quad j=3,\ldots,n,
\]
where $\wt{\mc X}$ is $\mc X$ written in the conjugate complex coordinates as those did in \eqref{e1-1}. Note that each surface $\zeta_j-\psi_j(\xi,\eta,\zeta)$ is
invariant under the flow of $\wt{\mc X}$. By the Implicit Function Theorem the equations
\[
\zeta_j-\psi_j(\xi,\eta,\zeta)=0,\quad j=3,\ldots,n,
\]
have a unique solution $\zeta=k(\xi,\eta)$, which is $C^\infty$. Representing $\zeta=k(\xi,\eta)$ in the cartesian coordinates gives
\[
z=h(x,y), \quad i.e. \,\,\, \, z_j=h_j(x,y), \quad j=3,\ldots,n.
\]
Clearly $\p_xh_j(0,0)=\p_yh_j(0,0)=0$ for $j=3,\ldots,n$. Then
\[
\mathcal M^*=\bigcap\limits_{j=3}\limits^n\{\zeta_j-k_j(\xi,\eta)\}=\bigcap\limits_{j=3}\limits^n\{z_j-h_j(x,y)\},
\]
is a center manifold of system \eqref{e1} at the origin.

\noindent $(b)$ Let $P_0=(x_0,y_0,z_0)$ be any point on $\mathcal M$ in a sufficiently small neighborhood of the origin,
and let $\varphi_t$ be the orbit of \eqref{e1} passing through $P_0$. Then we have
\[
\frac{dJ(\varphi_t)}{dt}=\mathcal X(J)|_{\varphi_t}=J\mbox{div}\mathcal X|_{\varphi_t}.
\]
This equation has the solution
\begin{equation}\label{ejf1}
J(\varphi_t(x_0,y_0,z_0))=J(x_0,y_0,z_0)\exp\left(\int_0^t\mbox{div}\mathcal X|_{\varphi_s}ds\right).
\end{equation}
By the assumption $\varphi_t$ is a periodic orbit. Denote its period by $T_0$. This last equation can be simplified to
\begin{equation}\label{e1-11}
J(x_0,y_0,z_0)=J(x_0,y_0,z_0)\exp\left(\int_0^{T_0}\mbox{div}\mathcal X|_{\varphi_s}ds\right).
\end{equation}
Restricted to the center manifold $\mathcal M$ system \eqref{e1} becomes
\begin{equation}\label{e1-8}
\dot x=-y+f_1(x,y,h(x,y)),\quad \dot y=x+f_2(x,y,h(x,y)).
\end{equation}
Written in polar coordinates $(x,y)=(r\cos\theta,r\sin\theta)$, we get from this two dimensional system
\[
\frac{d\theta}{1+O(r)}=dt.
\]
Integrating along the periodic orbit gives
\[
T_0=2\pi+O(r).
\]
So we have
\[
\int_0^{T_0}\mbox{div}\mathcal X|_{\varphi_s}ds=\int_0^{T_0}(\Lambda+O(|P_0|))ds=2\pi\Lambda+O(|P_0|).
\]
This together with \eqref{e1-11} yields that in a sufficiently small neighborhood of the origin
\[
J(x_0,y_0,z_0)=0.
\]
By the arbitrariness of $P_0\in\mathcal M$ we get that $J|_{\mathcal M}\equiv 0$. This proves statement $(b)$.

We complete the proof of the lemma.
\end{proof}

Having the above preparations we can prove Theorems \ref{t1} and \ref{t2}.

\subsection{Proof of Theorem \ref{t1}}  $(a)$
{\it Sufficiency}. If the matrix $A$ has conjugate complex eigenvalues, we write  system \eqref{e1} in \eqref{e1-1}. By Lemma \ref{l4} and its proof we get that
system \eqref{e1-1} has an analytic center manifold $\mathcal M=\bigcap\limits_{j=3}\limits^n\{\zeta_j=k_j(\xi,\eta)\}$. Again by Lemma \ref{l4},
system \eqref{e1-1} restricted to $\mathcal M$, i.e. \eqref{e1-8}, has an analytic inverse integrating factor $C(\xi,\eta)=\wt V(\xi,\eta,k(\xi,\eta))$, where
$\wt V$ is $V(x,y,z)$ written in $(\xi,\eta,\zeta)$.

We note that either $\zeta_j=z_j$ is a real coordinate or $\zeta_j=z_j+\mf i z_{j+1}$ and $\zeta_{j+1}=z_j-\mf i z_{j+1}$ for some $j$ are conjugate complex
coordinates.
In the latter write $k_j(\xi,\eta)=h_j(x,y)+\mf i h_{j+1}(x,y)$, we have $z_j=h_j(x,y)$. In the former write $h_j=k_j$. Then we have
\[
\mathcal M=\bigcap\limits_{j=3}\limits^n\{z_j=k_j(x,y)\}.
\]

Since $C(0,0)=V(0,0,0)\ne 0$, integrating the one--form
\[
\frac{x+f_2(x,y,h(x,y))}{V(x,y,h(x,y))}dx+\frac{y-f_1(x,y,h(x,y))}{V(x,y,h(x,y))}dy,
\]
provides an analytic first integral $H(x,y)$ of \eqref{e1-8} and it has the form  $H(x,y)=(x^2+y^2)/C(0,0)+\mbox{higher order term}$.
So we get from the Poincar\'e center theorem that the vector field $\mathcal X$ has the origin as a center on the center manifold $\mathcal M$.

The vector field $\mathcal X$ restricted to the center manifold $\mc M$ has the origin as a center and it has an analytic first integral.
These facts together with Theorems 6.3 and 7.1 of Sijbrand \cite{Si85} show that the center manifold at the origin is unique and analytic.
So we have $\mc M^c=\mc M$. Hence system \eqref{e1} restricted to $\mc M^c$ has the origin as a center.

\noindent {\it Necessity.}  First we write system \eqref{e1} in \eqref{e1-1} with the conjugate complex coordinates. Lemmas \ref{l1} and \ref{l2} show that system \eqref{e1-1} is analytically equivalent to its distinguished normal form, i.e.  system \eqref{e2}, through a distinguished normalization.

For the analytic differential system \eqref{e2} we have $g_1=g_2$ by the proof of Lemma \ref{l2}. We can check easily that $\widetilde  J=w_3\ldots w_n$ is an inverse Jacobian multiplier of system \eqref{e2} and is clearly analytic. Hence using the near identity analytic transformation from \eqref{e1-1} to \eqref{e2} we get that system \eqref{e1-1} has an analytic inverse Jacobian multiplier
\[
J^*=(\zeta_3-\phi_3(\xi,\eta,\zeta))\ldots(\zeta_n-\phi_n(\xi,\eta,\zeta))/D(\xi,\eta,\zeta),
\]
where $D(\xi,\eta,\zeta)$ is the determinant of the Jacobian matrix of the transformation from \eqref{e1-1} to \eqref{e2}, and satisfies $D(0,0,0)=1$.

Going back to the $(x,y,z)$ coordinates we get that system \eqref{e1} has an analytic inverse Jacobian multiplier of the form \eqref{*1}.

\noindent $(b)$ The analyticity and uniqueness of the center manifolds were proved in the sufficient part of statement $(a)$.
$\mathcal M^c\subset J^{-1}(0)$ follows from Lemma \ref{l5}\,$(b)$ and the first assertion.

We complete the proof of the theorem.  \qquad $\Box$

\subsection{Proof of Theorem \ref{t2}} $(a)$ Under the assumption of the theorem, we get from Lemma \ref{l2}\,$(a)$ that system \eqref{e1-1} is locally $C^\infty$
equivalent to its Poincar\'e--Dulac normal form \eqref{e2} with $g_1\ne g_2$. Direct calculations show that system \eqref{e2} has the $C^\infty$ inverse
Jacobian multiplier
\[
\widetilde J=w_3\ldots w_n uv (g_2(uv)-g_1(uv)),
\]
where $g_1(s),g_2(s)$ are $C^\infty$ functions and $g_2-g_1$ is non--flat at $s=0$. This shows that $\widetilde J=w_3\ldots w_n (uv)^lh(uv)$
with $l\ge 2$ and $h(0)\ne 0$. Without loss of generality we can assume $h(0)=1$. By the inverse transformations from \eqref{e1-1} to \eqref{e2}
we get that system \eqref{e2} has a $C^\infty$ inverse Jacobian multiplier of the form
\[
J(\xi,\eta,\zeta)=\prod\limits_{j=3}\limits^n(\zeta_j-\phi_j(\xi,\eta,\zeta))((\xi-\phi_1)(\eta-\phi_2))^lh((\xi-\phi_1)(\eta-\phi_2))/D(\xi,\eta,\zeta),
\]
where the $C^\infty$ smothness follows from the facts that $\widetilde J$ and the near identity transformation from \eqref{e1-1} to \eqref{e2} are
both $C^\infty$  smooth,
$D(\xi,\eta,\zeta)$ is the determinant of the Jacobian matrix of the transformation satisfying $D(0,0,0)=1$.

Note that $\phi_1$ and $\phi_2$ are conjugate. And for $j=3,\ldots,n$, either $\phi_j$ is real if $\zeta_j$ is real, or $\phi_j$ and $\phi_k$ are conjugate if some $\zeta_j$
and $\zeta_k$ are conjugate. So written the conjugate complex coordinates $(\xi,\eta,\zeta)$ (if exist) in the real ones $(x,y,z)$ we get that
system \eqref{e1} has the inverse Jacobian multiplier in the prescribed form \eqref{*2}.

\noindent $(b)$  The proof follows from statement $(a)$ and Lemma \ref{l5}.

We complete the proof of the theorem. \qquad \qquad $\Box$

\medskip

\section{Proof of Theorem \ref{t3}}\label{s3}

\subsection{Preparation to the proof}\label{s3.1} Under the assumption of Theorem \ref{t3} we get from Theorem \ref{t2} that system \eqref{e1} has a $C^\infty$ inverse Jacobian multiplier of the form
 \begin{eqnarray*}
    J(x,y,z)&=&\prod\limits_{j=0}\limits^{m-1}\left[(z_{3+2j}-\psi_{3+2j}(x,y,z))^2+(z_{3+2j+1}-\psi_{3+2j+1}(x,y,z))^2\right]\\
     && \qquad \times\prod\limits_{s=3+2m}\limits^n (z_s-\psi_s(x,y,z))(x^2+y^2)^lV(x,y,z),
    \end{eqnarray*}
where $V(0,0,0)=1$. Moreover, it follows from the proof of Lemmas \ref{l4} and \ref{l5} that system \eqref{e1} has a $C^\infty$
center manifold $\mc M^c$ at the origin, which is defined by the intersection of the invariant surfaces
\begin{equation}\label{e31-1}
z_j=\psi_j(x,y,z), \qquad j=3,\ldots,n.
\end{equation}
Furthermore the center manifold can be represented as $\mc M^c=\bigcap\limits_{j=3}\limits^n \{z_j=h_j(x,y)\}$, where $z=h(x,y)$ is the unique solution of
\eqref{e31-1} defined in a neighborhood of the origin, which is obtained from the Implicit Function Theorem.
 Recall that $z=(z_3,\ldots,z_n)$ and $h=(h_3,\ldots,h_n)$.

If $m>0$, set for $j=0,\ldots, m-1$
\begin{eqnarray*}
&&\zeta_{3+2j}=z_{3+2j}+\mf i z_{3+2j+1},\qquad\quad \psi_{3+2j}^*(x,y,\zeta)=\psi_{3+2j}+\mf i \psi_{3+2j+1},\\
&&\zeta_{3+2j+1}=z_{3+2j}-\mf i z_{3+2j+1},\qquad \psi_{3+2j+1}^*(x,y,\zeta)=\psi_{3+2j}-\mf i \psi_{3+2j+1}.
\end{eqnarray*}
Note that the determinant of the Jacobian matrix of the transformation from $(x,y,z)$ to $(x,y,\zeta)$ is a nonzero constant.
If $\psi_j(x,y,z)\ne 0$, we take the
change of variables
\begin{equation}\label{e3.1}
(u,v,w)=\Phi(x,y,z)=(x,  y,  \zeta-\psi^*(x,y,\zeta)),
\end{equation}
where $\zeta=(\zeta_3,\ldots,\zeta_{2m+2},z_{2m+3},\ldots,z_n)$ and
$\psi^*=(\psi_3^*,\ldots,\psi_{2m+2}^*,\psi_{2m+3},\ldots,\psi_n)$.
Then system \eqref{e1} is transformed to
\begin{equation}\label{e3.2}
\begin{array}{l}
\dot u=-v+g_1(u,v,w),\\
\dot v=\,\,\, \, u+g_2(u,v,w),\\
\dot w_j=w_j(\lambda_j+g_j(u,v,w)), \qquad j=3,\ldots,n
\end{array}
\end{equation}
where $g_1,g_2=O(|(u,v,w)|^2)$ and $g_j=O(|(u,v,w)|)$, $j=3,\ldots,n$. Correspondingly system \eqref{e3.2} has the center manifold $w=0$. Moreover system \eqref{e3.2} has the associated inverse Jacobian multiplier
\[
\frac{J\circ \Phi^{-1}(u,v,w)}{D\Phi^{-1}(u,v,w)}=w_3\ldots w_n (u^2+v^2)^l\wt V(u,v,w)
\]
where $\wt V(0,0,0)\ne 0$, and $D\Phi^{-1}$ is the determinant of the Jacobian matrix of $\Phi^{-1}$ with respect to
its variables and $D\Phi(0,0,0)=1$.

Since systems \eqref{e1} and \eqref{e3.2} are $C^\infty$ equivalent in a neighborhood of the origin and the corresponding inverse Jacobian multipliers have the same forms, so in what follows we assume without loss of generality that system \eqref{e1} has the center manifold $z=0$ and the coordinate hyperplane $z_j=0$ is invariant for $j=3,\ldots,n$.

Taking the cylindrical coordinate changes
\[
x=r\cos \theta,\quad y=r\sin\theta, \quad z=r s,
\]
with $r\ge 0$, system \eqref{e5} is transformed to
\begin{equation}\label{e3.3}
\dot \theta=1+\Theta(\theta,r,s,\ve), \quad
\dot r=R(\theta,r,s,\ve) \quad
\dot s=As+S(\theta,r,s,\ve),
\end{equation}
where
\begin{eqnarray*}
\Theta(\theta,r,s,\ve)&=&\frac{\cos\theta g_2(r\cos\theta,r\sin\theta,rs,\ve)-\sin\theta g_1(r\cos\theta,r\sin\theta,rs,\ve)}{r},\\
R(\theta,r,s,\ve)&=&\cos\theta g_1(r\cos\theta,r\sin\theta,rs,\ve)+\sin\theta g_2(r\cos\theta,r\sin\theta,rs,\ve),\\
S(\theta,r,s,\ve)&=&\frac{g(r\cos\theta,r\sin\theta,rs,\ve)-sR(\theta,r,s,\ve)}{r},
\end{eqnarray*}
where $g=(g_3,\ldots,g_n)^{tr}$ with $g_3,\ldots,g_n$ given in \eqref{e3.2}.
Notice that
\begin{eqnarray*}
&& R(\theta,0,s,\ve)=0, \,\,\,\,\qquad  R(\theta,r,s,0) =O(r^2),\\
&& \Theta(\theta,r,s,0)=O(r), \,\, \quad S(\theta,r,s,0)=O(r).
\end{eqnarray*}
Corresponding to the inverse Jacobian multiplier $J(x,y,z)$ of system \eqref{e1}, system \eqref{e3.3} with $\ve=0$ has the inverse Jacobian multiplier
\begin{equation}\label{ejf2}
J(r\cos\theta,r\sin\theta,rs)/r^{n-1}=s_3\ldots s_nr^{2l-1} k(\theta,r,s),
\end{equation}
with $k(\theta,0,0)=$\,constant\,$\ne 0$.

For $|\ve|\ll 1$ and $|r|$ suitably small, we always have $\dot\theta > 0$. So system \eqref{e3.3} can be equivalently written in
\begin{equation}\label{e3.4}
\begin{array}{l}
\dfrac{dr}{d\theta}=\dfrac{R(\theta,r,s,\ve)}{1+\Theta(\theta,r,s,\ve)}=:p(\theta,r,s,\ve),\\
\dfrac{ds}{d\theta}=\dfrac{As+S(\theta,r,s,\ve)}{1+\Theta(\theta,r,s,\ve)}=:As+q(\theta,r,s,\ve).
\end{array}
\end{equation}
Furthermore, we have
\begin{equation}\label{e3.5}
p(\theta,0,s,\ve)=0, \quad p(\theta,r,s,0)=O(r^2), \quad q(\theta,r,s,0)=O(r).
\end{equation}
And $q=(q_3,\ldots,q_n)^{tr}$ with $q_j$ having the factor $s_j$ when $\ve=0$ for $j=3,\ldots,n$.

Associated to system \eqref{e3.4} we have a  vector field
\[
\mc Y_\ve=\partial_\theta+p(\theta,r,s,\ve)\partial_r+\langle As+q(\theta,r,s,\ve), \partial_s\rangle,
\]
where $\partial_s=(\partial_{s_3},\ldots,\partial_{s_n})$.
Related to the inverse Jacobian multiplier \eqref{ejf2} of system \eqref{e3.3}, the vector field $\mc Y_0$ has the inverse Jacobian multiplier
\begin{equation}\label{e3.6}
J_c(\theta,r,s)=\frac{J(r\cos\theta,r\sin\theta,rs)}{r^{n-1}(1+\Theta(\theta,r,s,0)}=s_3\ldots s_nr^{2l-1}K(\theta,r,s),
\end{equation}
where $K=1+O(r)$.

Clear $p,q$ are periodic in $\theta$ with period $2\pi$, and they are well defined on the cylinder
$\mc C=\{(\theta,r,s,\ve)\in\mathbb R/(2\pi\mathbb R)\times \mathbb R^{n-1}\times \mb R^m:\, |r|,|\ve|\ll 1\}$. Furthermore we note that each periodic orbit
of system \eqref{e5} corresponds to a unique periodic orbit of system \eqref{e3.4} on $\mc C$. So, to study the periodic orbits of system \eqref{e5}
is equivalent to study the periodic orbits of system \eqref{e3.4}.

Denote by $\psi_\theta(r_0,s_0,\ve)$ the solution of system \eqref{e3.4} with the initial point $\psi_0(r_0,s_0,\ve)=(r_0,s_0)\in \mc C$. We have
\[
\psi_\theta(r_0,s_0,\ve)=(r_\theta(r_0,s_0,\ve),s_\theta(r_0,s_0,\ve)).
\]
On the cylinder $\mc C$, $\theta=2\pi$
coincides with $\theta=0$. We define the Poincar\'e map on the transversal section $\theta=0$ of the flow of \eqref{e3.4} by
\[
\mc P(r_0,s_0;\ve)=\psi_{2\pi}(r_0,s_0,\ve).
\]
Since system \eqref{e3.4} is analytic, and so is the Poincar\'e map $\mc P$. Set
\[
\mc P(r_0,s_0,\ve)=(\mc P_r(r_0,s_0,\ve),\mc P_s(r_0,s_0,\ve)),
\]
with
\[
\mc P_r(r_0,s_0,\ve)=r_{2\pi}(r_0,s_0,\ve) \quad \mbox{ and } \quad
\mc P_s(r_0,s_0,\ve)=s_{2\pi}(r_0,s_0,\ve).
\]
Then
\begin{eqnarray*}
\mc P_r(r_0,s_0,\ve)&=& r_0+\int_0^{2\pi} p(v, r_v(r_0,s_0,\ve),s_v(r_0,s_0,\ve),\ve),s,\ve)dv,\\
\mc P_s(r_0,s_0,\ve))&=& e^{A2\pi}\left(Es_0+\int_0^{2\pi} e^{-Av}q(v, r_v(r_0,s_0,\ve),s_v(r_0,s_0,\ve),\ve),\ve)dv\right),
\end{eqnarray*}
where $E$ is the unit matrix of order $n-2$.

Define the displacement function by
\[
\mc D(r_0,s_0,\ve)=\mc P(r_0,s_0,\ve)-(r_0,s_0).
\]
Then the periodic orbit of system \eqref{e3.4} is uniquely determined by the zero of the displacement function $\mc D$. Set
\[
\mc D_r=\mc P_r-r_0,\quad
\mc D_s=\mc P_s-s_0.
\]
Then $\mc D=(\mc D_r,\,\mc D_s)$.

In order to study the zeros of $\mc D(r_0,s_0,\ve)$ on $(r_0, s_0)$ for any fixed $\ve$ sufficiently small, we will solve $\mc D_s(r_0,s_0,\ve)=0$ in $s_0$ as a
function of $(r_0,\ve)$ in a small neighborhood of $(r_0,\ve)=(0,0)$. In fact, by \eqref{e3.5} we get easily that
\[
\mc D_s(0,0,0)=0, \qquad \frac{\partial \mc D_s}{\partial s}(0,0,0)=e^{2\pi A}-E.
\]
These together with the assumption on $A$ show that the matrix $e^{2\pi A}-E$ is invertible. So the Implicit Function Theorem yields that $D_s(r_0,s_0,\ve)=0$ has a unique solution
$s_0=s^*(r_0,\ve)$ in a neighborhood of $(r_0,\ve)=(0,0)$, which is analytic.  Substituting $s^*$ into $\mc D_r$ gives
\[
d(r_0,\ve):=\mc D_r(r_0,s^*(r_0,\ve),\ve).
\]
Note that $d(r_0,\ve)$ is analytic. Thus the number of periodic orbits of system \eqref{e3.4} is equal to the number of positive roots $r_0$ of $d(r_0,\ve)=0$.

Having the above preparation we can prove Theorem \ref{t3}.

\subsection{Proof of Theorem \ref{t3}}\label{s3.2}

As we discussed in Subsection \ref{s3.1}, for proving Theorem \ref{t3} we only need to study the number of zeros of $d(r_0,\ve)$ in $r_0$.

From the expression of the inverse Jacobian multiplier $J_c$ it follows that $J_c$ is periodic in $\theta$ with period $2\pi$.
The inverse Jacobian multiplier $J_c$ and the Poincar\'e map $\mc P(r_0,s_0,0)$ of system \eqref{e3.4} with $\ve =0$ has the relation
\begin{equation}\label{e3.7}
J_c(0,\mc P(r_0,s_0,0))=J_c(0,r_0,s_0)D\mc P(r_0,s_0,0),
\end{equation}
where $D\mc P$ denotes the determinant of the Jacobian matrix of $\mc P$ with respect to $(r_0,s_0)$. For a proof, see \cite{BG13}. 
Here for completeness we provide a proof. From \eqref{ejf1} we have
\begin{equation}\label{ejf3}
J_c(0,\vp_\theta(r_0,s_0,0))=J_c(0,\vp_0(r_0,s_0,0))\exp\left(\int_0^\theta\mbox{div}\mathcal Y_0\circ \vp_s(r_0,s_0,0)ds\right),
\end{equation}
where $\vp_\theta(r_0,s_0,\ve)$ is the flow of the vector field $\mathcal Y_\ve$ or of system \eqref{e3.4} satisfying $\vp_0(r_0,s_0,\ve)=(r_0,s_0)$. Restricted \eqref{ejf3} to $\theta=2\pi$ and by the definition of the Poincar\'e map, we have
  \begin{equation}\label{ejf4}
J_c(0,\mathcal P(r_0,s_0,0))=J_c(0,r_0,s_0))\exp\left(\int_0^{2\pi}\mbox{div}\mathcal Y_0\circ \vp_s(r_0,s_0,0)ds\right),
\end{equation}
  Since the Jacobian matrix $\dfrac{\partial \vp_\theta(r_0,s_0,\ve)}{\partial (r_0,s_0)}$ satisfies the variational equations of system \eqref{e3.4} along the solution $(r,s)=\vp_\theta(r_0,s_0,\ve)$,
\[
\frac{d Z}{d\theta}=\frac{\partial(p,As+q)}{\partial (r,s)}\circ\vp(r_0,s_0,\ve)Z.
\]
By the Liouvellian formula we have
\[
\det\dfrac{\partial \vp_\theta(r_0,s_0,\ve)}{\partial (r_0,s_0)}=\det\dfrac{\partial \vp_0(r_0,s_0,\ve)}{\partial (r_0,s_0)}\exp\left(\int_0^\theta\mbox{div}\mathcal Y_\ve\circ \vp_s(r_0,s_0,\ve)ds\right).
\]
Taking $\ve=0$ and $\theta=2\pi$, this last equation can be written in
\[
\det\dfrac{\partial \vp_{2\pi}(r_0,s_0,0)}{\partial (r_0,s_0)}=\exp\left(\int_0^{2\pi}\mbox{div}\mathcal Y_0\circ \vp_s(r_0,s_0,0)ds\right).
\]
This together with \eqref{ejf4} verify \eqref{e3.7}.

Writing \eqref{e3.7} in components and using \eqref{e3.6}, we have
\begin{equation}\label{e3.8}
\mc P_{s3}\ldots \mc P_{sn} \mc P_r^{2l-1}K(0,\mc P_r,\mc P_s) =s_{03}\ldots s_{0n} r_0^{2l-1}K(0,r_0,s_0)D\mc P(r_0,s_0,0),
\end{equation}
where $\mc P_s=(\mc P_{s3},\ldots,\mc P_{sn})$.

Since the hyperplane $s_j=0$ is invariant under the flow of \eqref{e3.4} with $\ve=0$ for $j=3,\ldots,n$, we get that
\begin{equation}\label{e3.9}
\mc P_s(r_0,s_0,0)=(s_{03}\mc P_{s3}^*(r_0,s_0),\ldots,s_{0n}\mc P_{sn}^*(r_0,s_0))=:\langle s_0,\mc P_s^*(r_0,s_0)\rangle,
\end{equation}
where $s_0=(s_{03},\ldots,s_{0n})$. These together with \eqref{e3.8} show that
\begin{equation}\label{e3.10}
P_{s3}^*\ldots \mc P_{sn}^*\mc P_r^{2l-1}K(0,\mc P_r,\mc P_s)= r_0^{2l-1}K(0,r_0,s_0)D\mc P(r_0,s_0,0).
\end{equation}
Direct calculations show that $D\mc P(r_0,s_0,0)|_{s_0=0}= \mc P_{s3}^*\ldots \mc P_{sn}^* \partial_r\mc P_r|_{s_0=0}$.  So \eqref{e3.10} is simplified to
\begin{equation}\label{e3.11}
 \mc P_r^{2l-1}K(0,\mc P_r,0)= r_0^{2l-1}K(0,r_0,0)\partial_r\mc P_r\qquad \mbox{ for } s_0=0.
\end{equation}

From \eqref{e3.9} it follows that the solution $s^*(r_0,0)$  of $\mc D_s(r_0,s_0,\ve)=0$ with $\ve=0$ satisfies $s^*(r_0,0)\equiv 0$. So we have $
d(r_0,0)=\mc D(r_0,0,0)$. Expanding $d(r_0,0)$ in the Taylor series gives
\[
d(r_0,0)=\delta_kr_0^k+O(r_0^{k+1}),
\]
with $\delta_k\ne 0$ a constant. Then
\[
\mc P_r(r_0,0,0)=r_0+\delta_kr_0^k+O(r_0^{k+1}).
\]
Consequently we have $K(0,\mc P_r,0)=K(0,r_0,0)+O(r_0^k)$.
Substituting these expressions in \eqref{e3.11}, with some simple calculations, gives
\[
K(0,r_0,0)\left[(2l-1)\delta_kr_0^{2l-2+k}+O(r^{2l-1+k})\right]+O(r_0^{2l-1+k})=K(0,r_0,0)k\delta_kr_0^{2l-2+k}.
\]
Since $K(0,0,0)=1$, equating the coefficients of $r_0^{2l-2+k}$  in the last equation we get
\[
k=2l-1.
\]
Note that $l\ge 2$ by Theorem \ref{t2}, it follows that $k\ge 3$.

From the expression of $d(r_0,0)$ and the Weierstrass Preparation Theorem we get that $d(r_0,\ve)$ has at most $2l-1$ zeroes. Since system \eqref{e3.3}
is invariant under the symmetric change of variables $(\theta,r,s)\rightarrow (\theta+\pi,-r,-s)$, and $r_0=0$ is always a solution of $d(r_0,\ve)=0$,
these verify that $d(r_0,\ve)=0$ has at most $l-1$  positive roots.

We note that the $2\pi$ periodic solutions of \eqref{e3.4} one to one correspond to periodic orbits of \eqref{e5} in a neighborhood of the origin.
While each $2\pi$ periodic solution of \eqref{e3.4} in a neighborhood of the origin is uniquely determined by a positive zero of $d(r_0,\ve)$. So system \eqref{e5}
has at most $l-1$ small amplitude limit cycles which are bifurcated from the Hopf on the two dimensional center manifold.

Finally we provide an example showing that there exist systems of form \eqref{e5} which do have $l-1$ limit cycles under sufficient small perturbation. Consider
a special perturbation to system \eqref{e3.2}
\begin{equation}\label{e11}
\begin{array}{l}
\dot u=-v+g_1(u,v,w)+uh(u,v,\ve),\\
\dot v=\,\,\,\, u+g_2(u,v,w)+vh(u,v,\ve),\\
\dot w_j=w_j(\lambda_j+g_j(u,v))+w_jh(u,v,\ve), \qquad j=3,\ldots,n
\end{array}
\end{equation}
with $h(u,v,\ve)=\sum\limits_{s=1}\limits^{l-1}\ve^{l-s}a_s(u^2+v^2)^s$ and $\ve$ a single paramter. Recall that if $\lambda_j$ is complex with nonvanishing imaginary part, it must have a
conjugate one, saying $\lambda_{j+1}$, then the variables $w_j$ and $w_{j+1}$ are conjugate complex ones. Write system \eqref{e11} in cylindrical coordinates $(\theta,r,s)$, we get a system as in the form \eqref{e3.3} with $\Theta(\theta,r,s,\ve)$ and $S(\theta,r,s,\ve)$ independent of $\ve$, and $R(\theta,r,s,\ve)=R(\theta,r,s,0)+\sum\limits_{s=1}\limits^{l-1}\ve^{l-s}a_sr^{2s+1}$. Then similar to \cite{BGM13,GGG11} we get that for $|\ve|\ll 1$ and suitable choices of $a_1,\ldots,a_{l-1}$ system \eqref{e11} can have $l-1$ small amplitude limit cycles in a neighborhood of the origin.

We complete the proof of the theorem. \qquad \qquad $\Box$

\medskip

\noindent{\bf Acknowledgements.} The author is partially supported by
NNSF of China grant 11271252,  RFDP of Higher Education of China grant 20110073110054, and FP7-PEOPLE-2012-IRSES-316338 of Europe.

\bigskip
\bigskip
\bibliographystyle{amsplain}

\end{document}